\newcommand{\R}{\mathbb{R}}
\newcommand{\fInt}{\mathcal{I}}
\newcommand{\iprod}[1]{\langle#1\rangle}
\newcommand{\bigiprod}[1]{\bigl\langle#1\bigr\rangle}
\newcommand{\Mult}{\mathcal{M}}
\newcommand{\Sh}{\mathbb{S}_h}
\newcommand{\Uref}{U_{\text{ref}}}
\begin{document}
\title{{ A semidiscrete} finite element approximation of a time-fractional
Fokker--Planck equation, non-smooth initial data\thanks{This work
was supported by the Australian Research Council grant DP140101193.}}
\titlerunning{Time-fractional Fokker--Planck equation}
\author{Kim Ngan Le \and William McLean \and Kassem Mustapha}
\institute{
Kim Ngan Le \at
School of Mathematics and Statistics,
The University of New South Wales, Sydney 2052, Australia.\\
\email{n.le-kim@unsw.edu.au}
\and
William McLean \at
School of Mathematics and Statistics,
The University of New South Wales, Sydney 2052, Australia.\\
\email{w.mclean@unsw.edu.au}
\and
Kassem Mustapha \at
Department of Mathematics and Statistics,
King Fahd University of Petroleum and Minerals,
Dhahran 31261, Saudi Arabia.\\
\email{kassem@kfupm.edu.sa}}

\date{\today}

\maketitle

\begin{abstract}
We present a new stability and convergence analysis for the spatial
discretisation of a time-fractional Fokker--Planck equation in a
polyhedral domain, using continuous, piecewise-linear, finite
elements. The forcing may depend on time as well as on the spatial
variables, and the initial data may have low regularity.
Our analysis uses a novel sequence of energy arguments in combination
with a generalised Gronwall inequality.  Although this theory covers
only the spatial discretisation, we present numerical experiments with
a fully-discrete scheme employing a very small time step, and observe
results consistent with the predicted convergence behaviour.
\keywords{Time-dependent forcing \and stability \and
non-smooth solutions, optimal convergence analysis}
\subclass{
65M12 \and 
65M15 \and 
65M60 \and 
65Z05 \and 
35Q84 \and 
45K05} 
\end{abstract}
\section{Introduction}
We consider the spatial discretisation via Galerkin finite elements
of a time-fractional Fokker--Planck
equation~\cite{AngstmannEtAl2015,HenryLanglandsStraka2010},
\begin{equation}\label{eq: FFP}
\partial_t u
	-\nabla\cdot\bigl(
	\partial_t^{1-\alpha}\kappa_\alpha\nabla u
	-\vec{F}\partial_t^{1-\alpha}u\bigr)={ 0}
	\quad\text{for $\vec{x}\in\Omega$ and $0<t<T$,}
\end{equation}
{ with  initial condition $u(\vec{x},0)=u_0(\vec{x}),$} where $\partial_t=\partial/\partial t$ and $\Omega$ is a polyhedral
domain in~$\R^d$ ($d\ge1$). The fractional exponent is restricted
to the range~$0<\alpha<1$, { $\kappa_\alpha>0$ is the diffusivity coefficient}. In our analysis, { we put
$\kappa_\alpha=1$ for convenience, but it is
straight forward to extend our methods to allow for a
spatially-varying diffusivity.} The fractional
derivative is taken in the Riemann--Liouville sense, that is,
$\partial_t^{1-\alpha}u=\partial_t\fInt^\alpha u$, where the
fractional integration operator~$\fInt^\alpha$ is defined by
\[
\fInt^\alpha u(t)=\omega_\alpha*u(t)
	=\int_0^t\omega_\alpha(t-s)u(s)\,ds,
	\quad\omega_\alpha(t)=\frac{t^{\alpha-1}}{\Gamma(\alpha)}.
\]
{ Though we impose  a homogeneous Dirichlet boundary condition,
\begin{equation}\label{eq: Dirichlet bc}
u(\vec{x},t)=0\quad\text{for $\vec{x}\in\partial\Omega$ and $0<t<T$,}
\end{equation}
 the proposed stability and errors analysis remain valid for zero-flux boundary condition, see Remark \ref{rem: zro flux}.}

{ The time-space dependent driving force~$\vec{F}$ and it time partial derivative, $\partial_t\vec{F}$,  are assumed to be in ~$L_\infty\bigl(\Omega\times(0,T),\R^d\bigr)$. When $\vec{F}$} is independent of~$t$, the model
problem \eqref{eq: FFP} can be rewritten in the form
\begin{equation}\label{eq: F(x)}
\fInt^{1-\alpha}(\partial_t u)-\nabla\cdot\bigl(\kappa_\alpha
	\nabla u-\vec{F}(\vec{x}) u \bigr)={ 0},
\end{equation}
where the first term is just the Caputo fractional derivative of
order~$\alpha$. For a one- or
two-dimensional spatial domain~$\Omega$, numerical methods applicable
to~\eqref{eq: F(x)} have been widely
studied~\cite{CaoFuHuang2012,ChenLiuZhuangAnh2009,Cui2015,Deng2007,%
FairweatherZhangYangXu2015,GaoSun2015a,GaoSun2015b,%
GraciaORiordanStynes2015,Jiang2015,SaadatmandiDehghanAzizi2012,%
VongWang2015,WeiZhangHe2013,ZhengLiuTurnerAnh2015}. In all of these
works, the solution~$u$ was assumed to be sufficiently regular,
including at~$t=0$.  Although \eqref{eq: F(x)} is in many
respects more convenient for constructing and analyzing the accuracy
of numerical schemes, only \eqref{eq: FFP} is physically
valid for a time-dependent
forcing~$\vec{F}$~\cite{HeinsaluPatriarcaGoychukHanggi2007}.

Our earlier paper~\cite{KimMcLeanMustapha2016} presented an analysis
of the semidiscrete finite element solution of~\eqref{eq: FFP}
that is limited to cases in which
\begin{enumerate}
\item the solution~$u$ is sufficiently regular,
\item the spatial domain~$\Omega$ is an interval on the real line
(that is, $d=1$),
\item the fractional exponent is in the range $1/2<\alpha<1$,
\item the boundary condition is of homogeneous Dirichlet
type~\eqref{eq: Dirichlet bc}.
\end{enumerate}
By employing { a different approach that based on}  novel energy arguments, 
we are able to relax significantly the
regularity requirements on~$u$, in addition to permitting $d\ge1$,
$0<\alpha<1$, and zero-flux~\eqref{eq: Neuman bc} as well as Dirichlet
boundary conditions.  This new approach leads to an error bound of
optimal order in~$L_2(\Omega)$ at each fixed~$t>0$, even for
non-smooth initial data~$u_0$. We consider only
continuous piecewise linear elements and (unlike our earlier
paper~\cite{KimMcLeanMustapha2016}) do not analyse any time
discretisation.

In Section~\ref{sec: FEM}, we define the semidiscrete finite
element scheme and outline our main results in the context of our
previous work~\cite{KimMcLeanMustapha2016}.
Section~\ref{sec: fractional} gathers together some
technical estimates involving fractional integrals.
Section~\ref{sec: stability} presents the new
stability result (Theorem~\ref{thm: stability}) and
Section~\ref{sec: error} the new error bound
(Theorem~\ref{thm: error}).  Finally, in Section~\ref{sec: numerical},
we discuss two numerical examples.  The first confirms both the
convergence rate and the dependence on~$t$ predicted by our theory.
The second looks briefly at how the method behaves when~$u_0$ is a
point mass, and therefore does not even belong to~$L_2(\Omega)$.
\section{The finite element solution}\label{sec: FEM}
{ The continuous solution  $u:(0,T]\to H^1_0(\Omega)$ of problem \eqref{eq: FFP} subject to the homogeneous Dirichlet boundary condition \eqref{eq: Dirichlet bc}, satisfies the weak form},
\begin{equation}\label{eq: FFP weak}
\iprod{\partial_tu,v}+\iprod{\partial_t^{1-\alpha}\nabla u,\nabla v}
	-\iprod{\vec{F}\partial_t^{1-\alpha}u,\nabla v}=0
\end{equation}
for all $v\in H^1_0(\Omega)$, where $\iprod{u,v}=\int_\Omega uv$~and
$\iprod{\vec{u},\vec{v}}=\int_\Omega\vec{u}\cdot\vec{v}$.  Let $h$
denote the maximum element diameter from a shape-regular triangulation
of~$\Omega$, and let $\Sh\subseteq H^1_0(\Omega)$ denote the usual
space of continuous, piecewise-linear functions that vanish
on~$\partial\Omega$.  The semidiscrete finite element
solution~$u_h:[0,T]\to\Sh$ is then defined by
\begin{equation}\label{eq: FEM}
\iprod{\partial_tu_h,\chi}
	+\iprod{\partial_t^{1-\alpha}\nabla u_h,\nabla\chi}
	-\iprod{\vec{F}\partial_t^{1-\alpha}u_h,\nabla\chi}=0
	\quad\text{for all $\chi\in\Sh$,}
\end{equation}
together with the initial condition~$u_h(0)=u_{0h}$, where
$u_{0h}\in\Sh$ is a suitable approximation to~$u_0$.

Previously, { for~$0\le t\le T$}, we
showed~\cite[Theorems 3.3~and 3.4]{KimMcLeanMustapha2016} that, $\|u_h(t)\|\le C\|u_{0h}\|_1$
and, provided $u_{0h}$ is chosen to be the Ritz projection of~$u_0$
onto~$\Sh$,
\begin{equation}\label{eq: prev error}
\|u_h(t)-u(t)\|\le
Ch^2\biggl(\|u_0\|_2^2+\int_0^t\|u'(s)\|_2^2\,ds
	\biggr)^{1/2}.
\end{equation}
Here, $\|v\|=\sqrt{\iprod{v,v}}$ denotes the norm in~$L_2(\Omega)$,
$u'(t)=\partial_tu$,
\[
\|v\|_r=\|(-\nabla^2)^{r/2}v\|=\biggl(\sum_{m=1}^\infty\lambda_m^r
	\iprod{v,\varphi_m}^2\biggr)^{1/2}\quad\text{for $r\ge0$,}
\]
and $\varphi_1$, $\varphi_2$, $\varphi_3$, \dots is a complete
orthonormal system in~$L_2(\Omega)$ consisting of
Dirichlet eigenfunctions of the Laplacian:
$\iprod{\varphi_m,\varphi_k}=\delta_{mk}$ and
\[
-\nabla^2\varphi_m=\lambda_m\varphi_m\quad\text{in $\Omega$,}
\quad\text{with $\varphi_m=0$ on~$\partial\Omega$.}
\]
The associated function
space~$\dot H^r(\Omega)=\{\,v\in L_2(\Omega):\|v\|_r<\infty\,\}$ is a
subspace of the usual Sobolev space~$H^r(\Omega)$
for~$0\le r\le1$; in particular, $\dot H^0(\Omega)=L_2(\Omega)$~and
$\dot H^1(\Omega)=H^1_0(\Omega)$.
Also, $\dot H^2(\Omega)=H^2(\Omega)\cap H^1_0(\Omega)$ provided
$\Omega$ is convex (so the Poisson problem is $H^2$-regular).

We prove in Theorem~\ref{thm: stability} a stronger stability
estimate,
\begin{equation}\label{eq: stability}
\|u_h(t)\|\le C\|u_{0h}\|\quad\text{for $0\le t\le T$.}
\end{equation}
Also, whereas the previous error bound~\eqref{eq: prev error} is
meaningful only if $u_0\in\dot H^2(\Omega)$ and
$u'\in L_2\bigl((0,T),\dot H^2(\Omega)\bigr)$, our new error analysis
makes a much weaker regularity assumption: for some~$r$ in the
range~$0\le r\le2$ there is a constant~$K_r$ such that
\begin{equation}\label{eq: u reg}
\|u(t)\|_2+t\|u'(t)\|_2\le t^{\alpha(r-2)/2}K_r
	\quad\text{for $0<t\le T$.}
\end{equation}
When $\vec{F}\equiv\vec{0}$ and the domain~$\Omega$ is convex,
it is known \cite[Theorem~4.4]{McLean2010} that such an estimate
holds with $K_r=C\|u_0\|_r$ in the case of Dirichlet boundary
conditions~\eqref{eq: Dirichlet bc}.  Since the term
of~\eqref{eq: FFP} involving $\vec{F}$ is of lower order in the spatial
variables, we conjecture that the same is true for a nonzero (but
sufficiently regular) forcing~$\vec{F}$.  In Theorem~\ref{thm: error},
we show that if $u_{0h}$ is chosen to be the $L_2$-projection of~$u_0$
onto~$\Sh$, then
\begin{equation}\label{eq: error}
\|u_h(t)-u(t)\|\le Ct^{\alpha(r-2)/2}h^2K_r
\quad\text{for $0\le t\le T$~and $0\le r\le2$.}
\end{equation}
For instance, in the worst case when~$r=0$, the error is
$O(t^{-\alpha}h^2)$.

\begin{remark}\label{rem: zro flux}
{ If we impose  a zero-flux boundary condition,
\begin{equation}\label{eq: Neuman bc}
\partial_t^{1-\alpha}\kappa_\alpha\frac{\partial u}{\partial n}
	-(\vec{F}\cdot\vec{n})\,\partial_t^{1-\alpha}u=0
	\quad\text{for $\vec{x}\in\partial\Omega$ and $0<t<T$,}
\end{equation}
where $\vec{n}$ denotes the outward unit normal to~$\Omega$,} then $u:(0,T]\to H^1(\Omega)$ satisfies \eqref{eq: FFP weak} for all
$v\in H^1(\Omega)$.  Likewise, $u_h$ is defined as in~\eqref{eq: FEM}
but the finite element space~$\Sh\subseteq H^1(\Omega)$ now consists
of \emph{all} continuous piecewise-linear functions (that is, the
elements of~$\Sh$ need not vanish on~$\partial\Omega$).  The
stability estimate~\eqref{eq: stability} remains valid, and the error
bound~\eqref{eq: error} holds assuming $u$
satisfies~\eqref{eq: u reg}, where $\|\cdot\|_2$ is now the norm
in~$H^2(\Omega)$ rather than~$\dot H^2(\Omega)$.  Note that for
either choice of boundary condition, the variational
equation~\eqref{eq: FEM} is equivalent to a system of Volterra
integral equations~\cite[Theorem~3.1]{KimMcLeanMustapha2016} that
admits a unique continuous solution~$u_h:[0,T]\to\Sh$.  Moreover,
the methods of Miller and
Feldstein~\cite[Theorem~1]{MillerFeldstein1971} show that $u_h$ is
continuously differentiable on~$(0,T]$. Finally, notice that in the
case of the zero-flux boundary condition~\eqref{eq: Neuman bc}, the total mass~$\int_\Omega u(\cdot,t)$
within~$\Omega$ is conserved.\end{remark}
\section{Fractional integrals}\label{sec: fractional}
In this section only, $C$ is an absolute constant.  Our analysis of
the semidiscrete finite element solution~$u_h$ will rely on the following
technical lemmas, in which $\phi$~and $\psi$ are suitably regular
functions of~$t>0$ taking values in a Hilbert space.

\begin{lemma}\label{lem: I nu mu}
If~ $0\le\mu\le\nu\le1$, then
\[
\int_0^t\|\fInt^\nu\phi\|^2\,ds\le Ct^{2(\nu-\mu)}
	\int_0^t \|\fInt^\mu\phi\|^2\,ds.
\]
\end{lemma}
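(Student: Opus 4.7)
The plan is to exploit the semigroup property of the Riemann--Liouville fractional integration operator, which gives
\[
\fInt^\nu \phi = \fInt^{\nu-\mu}(\fInt^\mu\phi) = \omega_{\nu-\mu} * (\fInt^\mu\phi),
\]
so that the quantity to be estimated is the time-$L_2$ norm of a convolution of the scalar kernel $\omega_{\nu-\mu}$ with the Hilbert-valued function $\fInt^\mu\phi$. The case $\mu=\nu$ is trivial, so I may assume $0<\nu-\mu\le 1$.

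Next I would reduce to a scalar problem by Minkowski's integral inequality: pulling the Hilbert norm inside the convolution yields
\[
\|\fInt^\nu\phi(s)\| \le \int_0^s \omega_{\nu-\mu}(s-r)\,\|\fInt^\mu\phi(r)\|\,dr = \bigl(\omega_{\nu-\mu}*\|\fInt^\mu\phi\|\bigr)(s).
\]
Now I would apply Young's convolution inequality with exponents $(p,q,r)=(1,2,2)$ on the interval $(0,t)$ (extending all functions by zero outside). This gives
\[
\int_0^t \|\fInt^\nu\phi(s)\|^2\,ds \le \|\omega_{\nu-\mu}\|_{L_1(0,t)}^2 \int_0^t \|\fInt^\mu\phi(s)\|^2\,ds.
\]

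Finally I would compute the $L_1$ norm of the kernel explicitly:
\[
\|\omega_{\nu-\mu}\|_{L_1(0,t)} = \int_0^t \frac{s^{\nu-\mu-1}}{\Gamma(\nu-\mu)}\,ds = \frac{t^{\nu-\mu}}{\Gamma(\nu-\mu+1)},
\]
and observe that $1/\Gamma(\nu-\mu+1)$ is bounded by an absolute constant for $\nu-\mu\in[0,1]$, since $\Gamma$ is continuous and strictly positive on $[1,2]$. Squaring gives exactly the claimed factor $Ct^{2(\nu-\mu)}$.

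Since the entire argument is a one-line application of Young's inequality after invoking the semigroup property, there is no substantial obstacle; the only minor subtlety is the Hilbert-valued setting, which is handled in the standard way by the Minkowski step above. I would keep the exposition brief, essentially just pointing to these three ingredients.
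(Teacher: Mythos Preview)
Your proof is correct and follows essentially the same route as the paper: both use the semigroup identity $\fInt^\nu\phi=\fInt^{\nu-\mu}(\fInt^\mu\phi)$ and then bound the $L_2$-in-time norm of the convolution by the $L_1$ norm of the kernel~$\omega_{\nu-\mu}$ times the $L_2$ norm of~$\fInt^\mu\phi$. The only cosmetic difference is that the paper packages the Minkowski--Young step as a citation to an earlier lemma of the authors (which yields the same bound $\omega_{\alpha+1}(t)^2\int_0^t\|\psi\|^2\,ds$ with $\alpha=\nu-\mu$, $\psi=\fInt^\mu\phi$), whereas you invoke the standard named inequalities directly.
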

\begin{proof}
If $\mu=\nu$ then there is nothing to prove, so assume $\mu<\nu$.
In a previous paper~\cite[Lemma~2.3]{KimMcLeanMustapha2016}, we showed
that for~$0<\alpha\le1$,
\[
\int_0^T\|\fInt^\alpha\psi(t)\|^2\,dt\le\omega_{\alpha+1}(T)
	\int_0^T\omega_\alpha(T-t)\int_0^t\|\psi(s)\|^2\,ds\,dt,
\]
and the right-hand side is bounded
by~$\omega_{\alpha+1}(T)^2\int_0^T\|\psi(s)\|^2\,ds$.
Putting $\psi=\fInt^\mu\phi$ and $\alpha=\nu-\mu$, it follows that
$\fInt^\alpha\psi=\fInt^\nu\phi$~and
$\omega_{\alpha+1}(T)\le CT^\alpha=CT^{\nu-\mu}$. \qed
\end{proof}

\begin{lemma}
If~ $0<\alpha<1$ and $\epsilon>0$, then
\begin{equation}\label{eq: fractional A}
\biggl|\int_0^t\iprod{\phi,\fInt^\alpha\psi}\,ds\biggr|
	\le\frac{1}{4\epsilon(1-\alpha)^2}
	\int_0^t\iprod{\fInt^\alpha\phi,\phi}\,ds
	+\epsilon\int_0^t\iprod{\fInt^\alpha\psi,\psi}\,ds,
\end{equation}
\begin{equation}\label{eq: fractional D}
\int_0^t\|\fInt^\alpha\phi\|^2\,ds
	\le\frac{Ct^\alpha}{1-\alpha}
		\int_0^t \iprod{\fInt^\alpha\phi,\phi}\,ds,
\end{equation}
\begin{equation}\label{eq: fractional E}
\int_0^t\iprod{\phi,\fInt^\alpha\phi}\,ds
	\le Ct^\alpha\int_0^t\|\phi\|^2\,ds.
\end{equation}
\end{lemma}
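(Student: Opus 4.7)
The three inequalities share a common structure built around the quadratic form $\mathcal{Q}(\phi):=\int_0^t\iprod{\phi,\fInt^\alpha\phi}\,ds$, which is nonnegative because the symmetrised kernel $\omega_\alpha(|s-r|)$ is of positive type (its Fourier transform on $\R$ is $2\cos(\alpha\pi/2)|\xi|^{-\alpha}/\Gamma(\alpha)\ge0$). My plan is to handle \eqref{eq: fractional E} directly, then establish a single coercivity estimate that drives both \eqref{eq: fractional D} and \eqref{eq: fractional A}.

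For \eqref{eq: fractional E} the nonnegativity of $\mathcal{Q}$ makes things immediate: Cauchy--Schwarz in $L_2\bigl((0,t);H\bigr)$ combined with Young's convolution inequality gives
\[
\mathcal{Q}(\phi)\le\|\phi\|_{L_2}\,\|\fInt^\alpha\phi\|_{L_2}
\le\|\omega_\alpha\|_{L_1(0,t)}\|\phi\|_{L_2}^2
=\frac{t^\alpha}{\Gamma(\alpha+1)}\int_0^t\|\phi\|^2\,ds,
\]
and $1/\Gamma(\alpha+1)$ is bounded on~$(0,1)$.

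The nontrivial ingredient for \eqref{eq: fractional D} and \eqref{eq: fractional A} is the coercivity bound
\[
\int_0^t\|\fInt^{\alpha/2}\phi\|^2\,ds\le\sec(\alpha\pi/2)\,\mathcal{Q}(\phi),
\qquad\sec(\alpha\pi/2)\le\frac{1}{1-\alpha},
\]
where the second inequality follows from $\cos(\alpha\pi/2)=\sin\bigl((1-\alpha)\pi/2\bigr)\ge1-\alpha$ (Jordan's inequality). I would prove this by extending $\phi$ by zero to $\R$ and applying Plancherel: the $L_2(\R;H)$-norm of $\omega_{\alpha/2}*\phi$ equals $\int_\R|\xi|^{-\alpha}\|\hat\phi(\xi)\|^2\,d\xi/(2\pi)$, while $\mathcal{Q}(\phi)=\operatorname{Re}\int_\R(i\xi)^{-\alpha}\|\hat\phi(\xi)\|^2\,d\xi/(2\pi)=\cos(\alpha\pi/2)\int_\R|\xi|^{-\alpha}\|\hat\phi(\xi)\|^2\,d\xi/(2\pi)$; restricting $s$ to $(0,t)$ on the left-hand side only decreases it.

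With coercivity in hand, \eqref{eq: fractional D} follows by applying Lemma~\ref{lem: I nu mu} with $\nu=\alpha$, $\mu=\alpha/2$ to obtain $\int_0^t\|\fInt^\alpha\phi\|^2\,ds\le Ct^\alpha\int_0^t\|\fInt^{\alpha/2}\phi\|^2\,ds$ and then invoking the coercive bound. For \eqref{eq: fractional A}, Cauchy--Schwarz on the Fourier-side integral yields
\[
\biggl|\int_0^t\iprod{\phi,\fInt^\alpha\psi}\,ds\biggr|
\le\sec(\alpha\pi/2)\sqrt{\mathcal{Q}(\phi)\mathcal{Q}(\psi)}
\le\frac{1}{1-\alpha}\sqrt{\mathcal{Q}(\phi)\mathcal{Q}(\psi)},
\]
and the weighted Young inequality $XY\le X^2/(4\epsilon)+\epsilon Y^2$ applied with $X=\sqrt{\mathcal{Q}(\phi)}/(1-\alpha)$, $Y=\sqrt{\mathcal{Q}(\psi)}$ reproduces the exact constant $1/\bigl(4\epsilon(1-\alpha)^2\bigr)$ in \eqref{eq: fractional A}. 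The hard part is the coercivity estimate itself: the Fourier derivation demands care with the complex branch of $(i\xi)^{-\alpha}$ and with zero-extension past $t$, and an elementary alternative via the spectral representation $\omega_\alpha(t)=\tfrac{\sin\alpha\pi}{\pi}\int_0^\infty\lambda^{\alpha-1}e^{-\lambda t}\,d\lambda$ together with the energy identity for $y_\lambda'+\lambda y_\lambda=\phi$ requires a delicate weighted Cauchy--Schwarz in the $\lambda$-parameter to recover the sharp $\sec(\alpha\pi/2)$ constant.
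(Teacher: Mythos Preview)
Your proposal is correct and follows essentially the same route as the paper: the coercivity estimate $\int_0^t\|\fInt^{\alpha/2}\phi\|^2\,ds\le\sec(\alpha\pi/2)\,\mathcal{Q}(\phi)$ and the Cauchy--Schwarz inequality $\bigl|\int_0^t\iprod{\phi,\fInt^\alpha\psi}\,ds\bigr|\le\sec(\alpha\pi/2)\sqrt{\mathcal{Q}(\phi)\mathcal{Q}(\psi)}$ are exactly what the paper invokes (citing Mustapha--Sch\"otzau rather than rederiving them via Plancherel), and the remaining steps---Lemma~\ref{lem: I nu mu} with $\nu=\alpha$, $\mu=\alpha/2$ for \eqref{eq: fractional D}, weighted Young for \eqref{eq: fractional A}, and a direct Cauchy--Schwarz/convolution bound for \eqref{eq: fractional E}---match the paper's argument line for line.
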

\begin{proof}
From a result of Mustapha~and
Sch\"otzau~\cite[Lemma~3.1(iii)]{MustaphaSchoetzau2014},
\[
\biggl|\int_0^t\iprod{\phi,\fInt^\alpha\psi}\,ds\biggr|
	\le\frac{1}{\cos(\alpha\pi/2)}
	\biggl(\int_0^t\iprod{\phi,\fInt^\alpha\phi}\,ds\biggr)^{1/2}
	\biggl(\int_0^t\iprod{\psi,\fInt^\alpha\psi}\,ds\biggr)^{1/2},
\]
so \eqref{eq: fractional A} follows because
$\cos(\alpha\pi/2)\ge1-\alpha$.  The same
paper~\cite[Lemma~3.1(ii)]{MustaphaSchoetzau2014} showed that
\begin{equation}\label{eq: coercivity}
\int_0^t\iprod{\phi,\fInt^\alpha\phi}\,ds
	\ge\cos(\pi\alpha/2)\int_0^t\|\fInt^{\alpha/2}\phi\|^2\,ds,
\end{equation}
and by choosing $\nu=\alpha$~and $\mu=\alpha/2$
in Lemma~\ref{lem: I nu mu} have
\[
\int_0^t\|\fInt^\alpha\phi\|^2\,ds\le Ct^\alpha
	\int_0^t\|\fInt^{\alpha/2}\phi\|^2\,ds,
\]
proving \eqref{eq: fractional D}.  Instead choosing
$\nu=\alpha$~and $\mu=0$ in Lemma~\ref{lem: I nu mu} gives
\[
\int_0^t\|\fInt^\alpha\phi\|^2\,ds
	\le Ct^{2\alpha}\int_0^t\|\phi\|^2\,ds,
\]
so
\[
\int_0^t\iprod{\phi,\fInt^\alpha\phi}\,ds
	\le\biggl(\int_0^t\|\phi\|^2\,ds\biggr)^{1/2}
	\biggl(\int_0^t\|\fInt^\alpha\phi\|^2\,ds\biggr)^{1/2}
	\le Ct^\alpha\int_0^t\|\phi\|^2\,ds,
\]
proving \eqref{eq: fractional E}.\qed
\end{proof}

\begin{lemma}\label{lem: y(t)}
If~ $0<\alpha<1$, then
\[
\int_0^t\|\fInt^\alpha\phi\|^2\le\frac{Ct^{\alpha/2}}{1-\alpha}
\int_0^t \omega_{\alpha/2}(t-s)y(s)\,ds
\quad\text{for}\quad
y(t)=\int_0^t\iprod{\phi,\fInt^\alpha\phi}\,ds.
\]
\end{lemma}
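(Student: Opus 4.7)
The plan is to combine three ingredients already available in the section: the semigroup identity $\fInt^\alpha=\fInt^{\alpha/2}\fInt^{\alpha/2}$, the Cauchy--Schwarz inequality applied with the weight $\omega_{\alpha/2}$, and the coercivity estimate~\eqref{eq: coercivity}, which is precisely what will let us replace an integral of $\|\fInt^{\alpha/2}\phi\|^2$ by an expression controlled by $y$.

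First I would write
\[
\fInt^\alpha\phi(r)=\int_0^r\omega_{\alpha/2}(r-s)\,\fInt^{\alpha/2}\phi(s)\,ds
\]
and apply Cauchy--Schwarz with weight $\omega_{\alpha/2}(r-s)$, using $\int_0^r\omega_{\alpha/2}(r-\sigma)\,d\sigma=\omega_{\alpha/2+1}(r)$, to obtain
\[
\|\fInt^\alpha\phi(r)\|^2\le\omega_{\alpha/2+1}(r)\int_0^r\omega_{\alpha/2}(r-s)\,\|\fInt^{\alpha/2}\phi(s)\|^2\,ds.
\]
Since $\omega_{\alpha/2+1}(r)\le Ct^{\alpha/2}$ for $r\le t$, integrating in $r$ over $(0,t)$ and swapping the order of integration (using $\int_s^t\omega_{\alpha/2}(r-s)\,dr=\omega_{\alpha/2+1}(t-s)$) gives
\[
\int_0^t\|\fInt^\alpha\phi\|^2\,dr\le Ct^{\alpha/2}\int_0^t\omega_{\alpha/2+1}(t-s)\,\|\fInt^{\alpha/2}\phi(s)\|^2\,ds.
\]

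To convert the integrand $\|\fInt^{\alpha/2}\phi(s)\|^2$ into $y(s)$, I would set $Y(s)=\int_0^s\|\fInt^{\alpha/2}\phi\|^2\,d\sigma$ and integrate by parts in $s$; because $\omega_{\alpha/2+1}(0)=0=Y(0)$ and $\omega_{\alpha/2+1}'=\omega_{\alpha/2}$, the right-hand side collapses to $\int_0^t\omega_{\alpha/2}(t-s)Y(s)\,ds$. The coercivity bound~\eqref{eq: coercivity} together with $\cos(\pi\alpha/2)\ge1-\alpha$ then yields $Y(s)\le y(s)/(1-\alpha)$, which produces the $(1-\alpha)^{-1}$ prefactor and finishes the proof. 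The only conceptual step that is not completely routine is recognising that one should apply Cauchy--Schwarz with the specific weight $\omega_{\alpha/2}$ (rather than a uniform weight) so that the integration by parts in the last stage returns precisely the kernel $\omega_{\alpha/2}$ needed in the statement; everything else amounts to the standard manipulations of Riemann--Liouville weights already used in this section.
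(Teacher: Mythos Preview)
Your proof is correct and follows essentially the same route as the paper's. The paper quotes the inequality
\[
\int_0^T\|\fInt^\nu\psi\|^2\,dt\le\omega_{\nu+1}(T)\int_0^T\omega_\nu(T-t)\int_0^t\|\psi(s)\|^2\,ds\,dt
\]
from~\cite[Lemma~2.3]{KimMcLeanMustapha2016}, sets $\nu=\alpha/2$, $\psi=\fInt^{\alpha/2}\phi$, and then applies~\eqref{eq: coercivity}; your weighted Cauchy--Schwarz, Fubini and integration-by-parts steps simply reprove that cited inequality in place, after which you invoke~\eqref{eq: coercivity} in the same way.
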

\begin{proof}
From our earlier paper~\cite[Lemma~2.3]{KimMcLeanMustapha2016},
\[
\int_0^T\|\fInt^\nu\psi(t)\|^2\,dt\le\omega_{\nu+1}(T)
	\int_0^T\omega_\nu(T-t)\int_0^t\|\psi(s)\|^2\,ds\,dt,
\]
so the result follows by letting $\nu=\alpha/2$ and
$\psi=\fInt^{\alpha/2}\phi$, and then using \eqref{eq: coercivity}.
\qed
\end{proof}

\begin{lemma}\label{lem: phi(t)-phi(0)}
If~ $0<\alpha<1$, then
\[
\|\phi(t)-\phi(0)\|^2\le\frac{t^{1-\alpha}}{(1-\alpha)^2}\int_0^t
	\iprod{\phi'(s),(\fInt^\alpha\phi')(s)}\,ds.
\]
\end{lemma}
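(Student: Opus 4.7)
The plan is to express $\phi(t)-\phi(0)$ as a convolution of $\fInt^{\alpha/2}\phi'$ against a suitable kernel, and then combine a Cauchy--Schwarz estimate with the coercivity inequality~\eqref{eq: coercivity}. The starting point is the identity $\phi(t)-\phi(0)=\fInt^1\phi'(t)$; applying the semigroup property of the fractional integral in the symmetric form $\fInt^1=\fInt^{1-\alpha/2}\fInt^{\alpha/2}$ gives
\[
\phi(t)-\phi(0)=\int_0^t\omega_{1-\alpha/2}(t-s)\,(\fInt^{\alpha/2}\phi')(s)\,ds.
\]
This symmetric split (rather than, say, $\fInt^1=\fInt^{1-\alpha}\fInt^{\alpha}$) is what produces on the right exactly the quantity $\fInt^{\alpha/2}\phi'$ whose squared $L_2$-norm in~$s$ is controlled by~\eqref{eq: coercivity}.

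Next, the Bochner triangle inequality followed by scalar Cauchy--Schwarz (applied to the weight $s\mapsto\omega_{1-\alpha/2}(t-s)$ and to $s\mapsto\|\fInt^{\alpha/2}\phi'(s)\|$) yields
\[
\|\phi(t)-\phi(0)\|^2\le\biggl(\int_0^t\omega_{1-\alpha/2}(t-s)^2\,ds\biggr)\int_0^t\|\fInt^{\alpha/2}\phi'(s)\|^2\,ds.
\]
A direct computation gives $\int_0^t\omega_{1-\alpha/2}(t-s)^2\,ds=t^{1-\alpha}\bigl/\bigl[(1-\alpha)\,\Gamma(1-\alpha/2)^2\bigr]$. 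For the remaining integral, the coercivity estimate~\eqref{eq: coercivity} combined with the bound $\cos(\pi\alpha/2)\ge 1-\alpha$ already invoked earlier in this section gives
\[
\int_0^t\|\fInt^{\alpha/2}\phi'(s)\|^2\,ds\le\frac{1}{1-\alpha}\int_0^t\iprod{\phi'(s),(\fInt^\alpha\phi')(s)}\,ds.
\]

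Multiplying the two estimates yields the claimed bound, up to an extra factor $\Gamma(1-\alpha/2)^{-2}$. Since $1-\alpha/2\in(1/2,1)$ and $\Gamma$ is decreasing on~$(0,1]$ with $\Gamma(1)=1$, one has $\Gamma(1-\alpha/2)\ge 1$, so this residual factor is harmless and the constant $(1-\alpha)^{-2}$ is recovered exactly. I expect the only real conceptual step to be choosing the symmetric splitting of $\fInt^1$ about $\alpha/2$; with that in hand the remainder is bookkeeping of Gamma-function constants to match the advertised dependence on~$1-\alpha$.
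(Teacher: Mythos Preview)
Your argument is correct and follows essentially the same route as the paper. The paper simply cites the intermediate inequality
\[
\|\phi(t)-\phi(0)\|^2\le\frac{t^{1-\alpha}}{1-\alpha}\int_0^t\|\fInt^{\alpha/2}\phi'(s)\|^2\,ds
\]
from an earlier work, whereas you derive it directly via the splitting $\fInt^1=\fInt^{1-\alpha/2}\fInt^{\alpha/2}$ and Cauchy--Schwarz (in fact with the slightly sharper constant $\Gamma(1-\alpha/2)^{-2}$); both proofs then finish identically by invoking~\eqref{eq: coercivity} together with $\cos(\pi\alpha/2)\ge 1-\alpha$.
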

\begin{proof}
We showed previously \cite[Lemma~2.1]{KimMcLeanMustapha2016} that
\[
\|\phi(t)-\phi(0)\|^2\le\frac{t^{1-\alpha}}{1-\alpha}\int_0^t
	\|\fInt^{\alpha/2}\phi'(s)\|^2\,ds,
\]
so the desired estimate follows from~\eqref{eq: coercivity} and the
inequality~$\cos(\alpha\pi/2)\ge1-\alpha$.\qed
\end{proof}

\section{Stability}\label{sec: stability}
We seek to estimate the finite element solution~$u_h(t)$ in terms of
the initial data~$u_{0h}$.  Throughout, the generic constant~$C$ may
depend on $\alpha$, $T$ and the { vector norms of $\vec{F}$~and
$\vec{F}'=\partial_t\vec{F}$ in~$L_\infty(\Omega\times(0,T))$}.

It will be convenient to define
\begin{equation}\label{eq: M B}
\begin{aligned}
\Mult \phi(t)&=t\phi(t),&
\vec{B}_1(\phi)&=\fInt^1(\vec{F}\partial_t^{1-\alpha}\phi),\\
\vec{B}_2(\phi)&=(\Mult-\alpha\fInt){ \vec{B}_1(\phi)},&
\vec{B}_3(\phi)&=[\Mult{ \vec{B}_1(\phi)}]',
\end{aligned}
\end{equation}
and we will use the elementary identities
\begin{equation}\label{eq: M I}
\Mult\fInt^\alpha-\fInt^\alpha\Mult=\alpha\fInt^{\alpha+1}
\end{equation}
and
\begin{equation}\label{eq: Caputo}
(\partial_t^{1-\alpha}
\phi)(t)=(\fInt^\alpha\phi)'=\phi(0)\omega_\alpha(t)
	+(\fInt^\alpha\phi')(t).
\end{equation}

\begin{lemma}\label{lem: B integrals}
For $0\le t\le T$,
\begin{gather*}
\int_0^t\|\vec{B}_1(\phi)\|^2\,ds\le C
	\int_0^t\|\fInt^\alpha\phi\|^2\,ds,\qquad
\int_0^t\|\vec{B}_2(\phi)\|^2\,ds\le Ct^2
	\int_0^t\|\fInt^\alpha\phi\|^2\,ds,\\
\int_0^t\|\vec{B}_3(\phi)\|^2\,ds
	\le C\int_0^t\bigl(\|\fInt^\alpha(\Mult\phi)'\|^2
	+\|\fInt^\alpha(\Mult\phi)\|^2+\|\fInt^\alpha\phi\|^2\bigr)\,ds.
\end{gather*}
\end{lemma}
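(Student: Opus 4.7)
The plan is to handle the three estimates in sequence, each time using integration by parts in time to trade the awkward factor $\partial_t^{1-\alpha}\phi=(\fInt^\alpha\phi)'$ for a factor of $\fInt^\alpha\phi$. Since $\fInt^\alpha\phi(0)=0$, integration by parts in $\vec{B}_1(\phi)(t)=\int_0^t\vec{F}(s)(\fInt^\alpha\phi)'(s)\,ds$ yields the boundary term $\vec{F}(t)\fInt^\alpha\phi(t)$ together with the remainder $-\int_0^t\vec{F}'(s)\fInt^\alpha\phi(s)\,ds$. Bounding the coefficients by $\|\vec{F}\|_\infty$ and $\|\vec{F}'\|_\infty$ and using Cauchy--Schwarz on the remainder produces a pointwise estimate of the form $C\|\fInt^\alpha\phi(t)\|^2+Ct\int_0^t\|\fInt^\alpha\phi\|^2\,ds$, and a further integration in~$t$ (absorbing powers of~$T$ into~$C$) gives the first inequality. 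For~$\vec{B}_2$, expand $(\Mult-\alpha\fInt)\vec{B}_1(\phi)(t)=t\vec{B}_1(\phi)(t)-\alpha\int_0^t\vec{B}_1(\phi)(s)\,ds$, apply Cauchy--Schwarz, and integrate; the factor~$t^2$ in the lemma arises naturally from the $t\vec{B}_1$ term, after which the first bound closes the argument.

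The interesting case is~$\vec{B}_3$. The plan is to rewrite $\Mult\vec{B}_1(\phi)$ using the integration-by-parts representation of~$\vec{B}_1$ together with the commutator identity~\eqref{eq: M I} in the form $t\fInt^\alpha\phi=\fInt^\alpha(\Mult\phi)+\alpha\fInt^{\alpha+1}\phi$, obtaining
\[
\Mult\vec{B}_1(\phi)(t)=\vec{F}(t)\bigl[\fInt^\alpha(\Mult\phi)(t)+\alpha\fInt^{\alpha+1}\phi(t)\bigr]-t\int_0^t\vec{F}'(s)\fInt^\alpha\phi(s)\,ds.
\]
Differentiating in~$t$ and applying the product rule produces terms of the types $\vec{F}'\fInt^\alpha(\Mult\phi)$, $\vec{F}(\fInt^\alpha(\Mult\phi))'$, $\vec{F}\fInt^\alpha\phi$, $\vec{F}'\fInt^{\alpha+1}\phi$, and the integral of $\vec{F}'\fInt^\alpha\phi$. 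Each of these, after squaring and integrating, is dominated by one of the three quantities on the right-hand side of the claim; the $\fInt^{\alpha+1}\phi$ and integral contributions are absorbed into $\int_0^t\|\fInt^\alpha\phi\|^2\,ds$ using Cauchy--Schwarz and the boundedness of~$t$ on~$[0,T]$.

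The main obstacle will be the bookkeeping in the $\vec{B}_3$ estimate. Applying the commutator identity~\eqref{eq: M I} \emph{before} differentiating in~$t$ is essential: otherwise one is left with a term $t\vec{F}(t)(\fInt^\alpha\phi)'(t)$, whose $L_2$ control would demand an estimate on $(\fInt^\alpha\phi)'$ that is not available on the claimed right-hand side.
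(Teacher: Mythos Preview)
Your proposal is correct and follows essentially the same route as the paper: integration by parts to obtain $\vec{B}_1(\phi)=\vec{F}\fInt^\alpha\phi-\fInt^1(\vec{F}'\fInt^\alpha\phi)$, the pointwise expansion of $(\Mult-\alpha\fInt)\vec{B}_1$ for the second estimate, and for~$\vec{B}_3$ the crucial use of~\eqref{eq: M I} \emph{before} differentiating so that no bare $(\fInt^\alpha\phi)'$ survives. One minor omission in your list of ``types'' is the term $-t\vec{F}'(t)\fInt^\alpha\phi(t)$ arising from the product rule on $-t\int_0^t\vec{F}'\fInt^\alpha\phi\,ds$; this is trivially absorbed into the $\|\fInt^\alpha\phi\|^2$ contribution since $t\le T$, so the argument is complete.
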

\begin{proof}
Integration by parts { (in time)} shows that
\begin{equation}\label{eq: by parts}
\vec{B}_1(\phi)
=\vec{F}\fInt^\alpha\phi-\fInt^1(\vec{F}'\fInt^\alpha\phi),
\end{equation}
and our assumptions on~$\vec{F}$ imply
\begin{equation}\label{eq: intermediate}
\|\vec{F}\fInt^\alpha\phi\|^2\le C\|\fInt^\alpha\phi\|^2
\quad\text{and}\quad
\|\fInt^1(\vec{F}'\fInt^\alpha\phi)\|^2\le
Ct\int_0^t\|\fInt^\alpha\phi\|^2\,ds,
\end{equation}
so the first estimate follows at once. { The second estimate follows immediately  from the first one and the inequality 
\[
\|\vec{B}_2(\phi)(s)\|^2\le Cs^2\|\vec{B}_1(\phi)(s)\|^2
	+C s\fInt^1(\|\vec{B}_1(\phi)\|^2)(s).\]}
With the help of the identities \eqref{eq: by parts}~and
\eqref{eq: M I}, we find that
\begin{align*}
\Mult\vec{B}_1(\phi)
	&=\Mult\bigl(\vec{F}\fInt^\alpha\phi
		-\fInt^1(\vec{F}'\fInt^\alpha\phi)\bigr)
	=\vec{F}(\fInt^\alpha\Mult\phi+\alpha\fInt^{\alpha+1}\phi)
		-\Mult\fInt^1(\vec{F}'\fInt^\alpha\phi)
\end{align*}
so
\begin{multline*}
\vec{B}_3(\phi)=\vec{F}'(\fInt^\alpha\Mult\phi
	+\alpha\fInt^{\alpha+1}\phi)
	+\vec{F}(\fInt^\alpha(\Mult\phi)'+\alpha\fInt^\alpha\phi)\\
-\fInt^1(\vec{F}'\fInt^\alpha\phi)
		-\Mult\vec{F}'\fInt^\alpha\phi.
\end{multline*}
Thus,
\begin{multline*}
\|\vec{B}_3(\phi)\|^2
	\le C\bigl(\|\fInt^\alpha\Mult\phi\|^2
	+\|\fInt^\alpha(\Mult\phi)'\|^2\bigr)
	+C(1+t^2)\|\fInt^\alpha\phi\|^2\\
	+Ct\int_0^t\|\fInt^\alpha\phi\|^2\,ds,
\end{multline*}
which implies the third estimate.\qed
\end{proof}

In the next two lemmas, we prove preliminary stability estimates for
$u_h$~and $\Mult u_h$.

\begin{lemma}\label{lem: stability A}
The finite element solution satisfies, for $0\le t\le T$,
\[
\int_0^t\bigl(\iprod{u_h,\fInt^\alpha u_h}
	+\|\fInt^\alpha\nabla u_h\|^2\bigr)\,ds
	\le Ct^{1+\alpha}\|u_{0h}\|^2
\]
and
\[
\int_0^t\|\fInt^\alpha u_h\|^2\,ds\le Ct^{1+2\alpha}\|u_{0h}\|^2.
\]
\end{lemma}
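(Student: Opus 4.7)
The plan is to test a time-integrated form of~\eqref{eq: FEM} with $\chi=\fInt^\alpha u_h(s)$, which lies in~$\Sh$ for each $s$. Applying $\fInt^1$ to~\eqref{eq: FEM} and using the identity $\fInt^1\partial_t^{1-\alpha}=\fInt^\alpha$ (a consequence of~\eqref{eq: Caputo}) together with the definition of~$\vec{B}_1$ in~\eqref{eq: M B} produces, for every $\chi\in\Sh$,
\[
\iprod{u_h(s)-u_{0h},\chi}+\iprod{\fInt^\alpha\nabla u_h(s),\nabla\chi}-\iprod{\vec{B}_1(u_h)(s),\nabla\chi}=0.
\]
Choosing $\chi=\fInt^\alpha u_h(s)$, so that $\nabla\chi=\fInt^\alpha\nabla u_h(s)$, and integrating in~$s$ from $0$ to~$t$ yields the energy identity
\[
Y(t):=\int_0^t\bigl(\iprod{u_h,\fInt^\alpha u_h}+\|\fInt^\alpha\nabla u_h\|^2\bigr)\,ds
=\int_0^t\iprod{u_{0h},\fInt^\alpha u_h}\,ds+\int_0^t\iprod{\vec{B}_1(u_h),\fInt^\alpha\nabla u_h}\,ds.
\]
Write $Z(t)=\int_0^t\iprod{u_h,\fInt^\alpha u_h}\,ds\le Y(t)$, which is non-decreasing in~$t$.

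Next I would bound the two forcing integrals. For the first, apply~\eqref{eq: fractional A} with $\phi=u_{0h}$ (constant in~$s$) and $\psi=u_h$; since $\fInt^\alpha u_{0h}(s)=\omega_{\alpha+1}(s)u_{0h}$, a direct computation gives $\int_0^t\iprod{\fInt^\alpha u_{0h},u_{0h}}\,ds\le Ct^{1+\alpha}\|u_{0h}\|^2$, so with a convenient choice of $\epsilon$ the first integral is at most $\tfrac14 Z(t)+Ct^{1+\alpha}\|u_{0h}\|^2$. For the second integral I would use Cauchy--Schwarz pointwise in space followed by Young's inequality to obtain $\tfrac12\int_0^t\|\fInt^\alpha\nabla u_h\|^2\,ds+C\int_0^t\|\vec{B}_1(u_h)\|^2\,ds$; Lemma~\ref{lem: B integrals} replaces $\|\vec{B}_1(u_h)\|^2$ by $\|\fInt^\alpha u_h\|^2$, and Lemma~\ref{lem: y(t)} then turns the latter into $Ct^{\alpha/2}\int_0^t\omega_{\alpha/2}(t-s)Z(s)\,ds$.

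Absorbing the $\tfrac14 Z(t)$ and $\tfrac12\int_0^t\|\fInt^\alpha\nabla u_h\|^2\,ds$ contributions into $Y(t)$ on the left, using $Z\le Y$ on the right, and bounding the $t^{\alpha/2}$ prefactor by $T^{\alpha/2}$, the chain of inequalities collapses to a weakly-singular Volterra inequality
\[
Y(t)\le Ct^{1+\alpha}\|u_{0h}\|^2+C\int_0^t\omega_{\alpha/2}(t-s)Y(s)\,ds.
\]
A standard fractional (Henry-type) Gronwall lemma applied here produces $Y(t)\le Ct^{1+\alpha}\|u_{0h}\|^2$, which is the first claim. The second claim is an immediate corollary of~\eqref{eq: fractional D}: $\int_0^t\|\fInt^\alpha u_h\|^2\,ds\le\tfrac{Ct^\alpha}{1-\alpha}Z(t)\le\tfrac{Ct^\alpha}{1-\alpha}Y(t)\le Ct^{1+2\alpha}\|u_{0h}\|^2$. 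I expect the main obstacle to be controlling the $\vec{B}_1$ term sharply enough that the Gronwall step closes without a spurious singularity as $\alpha\to 1$; this is precisely where Lemma~\ref{lem: y(t)}, rather than the cruder~\eqref{eq: fractional D}, is the appropriate tool, since it converts $\int_0^t\|\fInt^\alpha u_h\|^2\,ds$ into a convolution against~$Z$ rather than a multiple of~$Z(t)$ with a prefactor $t^\alpha/(1-\alpha)$ that need not be small.
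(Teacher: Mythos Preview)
Your proposal is correct and follows essentially the same approach as the paper: integrate~\eqref{eq: FEM} once in time, test with $\chi=\fInt^\alpha u_h$, handle the $u_{0h}$ term via~\eqref{eq: fractional A} and the $\vec{B}_1$ term via Young's inequality and Lemma~\ref{lem: B integrals}, then convert $\int_0^t\|\fInt^\alpha u_h\|^2\,ds$ to a convolution against $Z$ using Lemma~\ref{lem: y(t)} and close with a weakly-singular Gronwall inequality (the paper cites the Dixon--McKee version, with $b(t)=Ct^{\alpha/2}$ rather than your $CT^{\alpha/2}$, but this is cosmetic). One small correction to your closing remark: the paper's proof does \emph{not} avoid a blow-up of the constant as~$\alpha\to1$; that singularity is inherited from the bounds $\cos(\alpha\pi/2)\ge1-\alpha$ underlying~\eqref{eq: fractional A} and~\eqref{eq: coercivity}, and the paper explicitly acknowledges it after Theorem~\ref{thm: stability}.
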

\begin{proof}
We integrate \eqref{eq: FEM} in time to obtain
\begin{equation}\label{eq: I FEM}
\iprod{u_h(t),\chi}+\iprod{(\fInt^\alpha\nabla u_h)(t),\nabla\chi}
	-\iprod{\vec{B}_1(u_h)(t),\nabla\chi}
	=\iprod{u_{0h},\chi}
\end{equation}
and then choose~$\chi=\fInt^\alpha u_h(t)$ so that
\begin{align*}
\iprod{u_h,\fInt^\alpha u_h}
	+\|\fInt^\alpha\nabla u_h\|^2&=
\iprod{\vec{B}_1(u_h),\fInt^\alpha\nabla u_h}
	+\iprod{u_{0h},\fInt^\alpha u_h}\\
	&\le\tfrac12\|\vec{B}_1(u_h)\|^2
    +\tfrac12\|\fInt^\alpha\nabla u_h\|^2
	+\iprod{u_{0h},\fInt^\alpha u_h}.
\end{align*}
Therefore, after cancelling the
term~$\tfrac12\|\fInt^\alpha\nabla u_h\|^2$, integrating in time
and applying Lemma~\ref{lem: B integrals}, we deduce that
\begin{equation}\label{eq: no F}
\int_0^t\bigl(
\iprod{u_h,\fInt^\alpha u_h}+\tfrac12\|\fInt^\alpha\nabla u_h\|^2
\bigr)\,ds\le C\int_0^t\|\fInt^\alpha u_h\|^2\,ds
	+\int_0^t\iprod{u_{0h},\fInt^\alpha u_h} \,ds.
\end{equation}
From~\eqref{eq: fractional A} with $\phi=u_{0h}$~and $\psi=u_h$,
\[
\int_0^t\iprod{u_{0h},\fInt^\alpha u_h} \,ds\le
	C\int_0^t\iprod{u_{0h},\fInt^\alpha u_{0h}}\,ds
	+\frac12\int_0^t\iprod{u_h,\fInt^\alpha u_h}\,ds,
\]
so if we define
\[
y(t)=\int_0^t\bigl(\iprod{u_h\,\fInt^\alpha u_h}
	+\|\fInt^\alpha\nabla u_h\|^2\bigr)\,ds,
\]
then
\[
y(t)\le C\int_0^t\iprod{u_{0h},\fInt^\alpha u_{0h}}\,ds
		+C\int_0^t\|\fInt^\alpha u_h\|^2\,ds
			\quad\text{for $0\le t\le T$.}
\]
Noting that $(\fInt^\alpha u_{0h})(t)=u_{0h}\omega_{\alpha+1}(t)$,
and applying Lemma~\ref{lem: y(t)} with~$\phi=u_h$, it
follows that
\begin{equation}\label{eq: y(t) A}
y(t)\le a(t)+b(t)\int_0^t\frac{(t-s)^{\alpha/2-1}}{\Gamma(\alpha/2)}
	\,y(s)\,ds\quad\text{for $0\le t\le T$,}
\end{equation}
where
\[
a(t)=Ct^{\alpha+1}\|u_{0h}\|^2
\quad\text{and}\quad
b(t)=Ct^{\alpha/2}.
\]
Let $E_\beta(z)=\sum_{n=0}^\infty z^n/\Gamma(1+n\beta)$ denote the
Mittag--Leffler function.  A generalised Gronwall inequality of Dixon
and McKee~\cite[Theorem~3.1]{DixonMcKee1986} (also stated in our
earlier paper~\cite[Lemma~2.6]{KimMcLeanMustapha2016}) then yields
\begin{equation}\label{eq: y(t) B}
y(t)\le a(t)E_{\alpha/2}\bigl(b(t)t^{\alpha/2}\bigr)\le Ca(t)
	\quad\text{for $0\le t\le T$.}
\end{equation}
The first estimate of the lemma follows at once, and the second is
then a consequence of~\eqref{eq: fractional D}.\qed
\end{proof}

\begin{lemma}\label{lem: stability B}
For $0\le t\le T$,
\[
\int_0^t\bigl(\iprod{\Mult u_h,\fInt^\alpha\Mult u_h}
	+\|\fInt^\alpha\Mult\nabla u_h\|^2\bigr)\,ds
		\le Ct^{3+\alpha}\|u_{0h}\|^2
\]
and
\[
\int_0^t \|\fInt^\alpha\Mult u_h\|^2\,ds
	\le Ct^{3+2\alpha}\|u_{0h}\|^2.
\]
\end{lemma}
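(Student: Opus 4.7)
The plan is to mirror the argument for Lemma~\ref{lem: stability A}, but applied to the rescaled function $\Mult u_h=t u_h(t)$. First I would multiply the integrated equation~\eqref{eq: I FEM} by~$t$, use the commutator identity~\eqref{eq: M I} to replace $\Mult\fInt^\alpha\nabla u_h$ by $\fInt^\alpha\Mult\nabla u_h+\alpha\fInt^{\alpha+1}\nabla u_h$, and then test with $\chi=\fInt^\alpha\Mult u_h\in\Sh$. This produces the coercive pair $\iprod{\Mult u_h,\fInt^\alpha\Mult u_h}$ together with $\|\fInt^\alpha\Mult\nabla u_h\|^2$ on the left-hand side, plus three additional inner products on the right-hand side coming from the commutator term, the $\vec{B}_1$ term, and the data $\Mult u_{0h}$.

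The two cross terms involving $\fInt^{\alpha+1}\nabla u_h$ and $\Mult\vec{B}_1(u_h)$ are handled by Young's inequality so as to absorb half of $\|\fInt^\alpha\Mult\nabla u_h\|^2$ back into the left-hand side. The data term is bounded via~\eqref{eq: fractional A} with $\phi=\Mult u_{0h}$ and $\psi=\Mult u_h$, which absorbs half of $\iprod{\Mult u_h,\fInt^\alpha\Mult u_h}$. After integrating in time, the proof reduces to bounding three quantities by $Ct^{3+\alpha}\|u_{0h}\|^2$: first, $\int_0^t\|\fInt^{\alpha+1}\nabla u_h\|^2\,ds$ via Lemma~\ref{lem: I nu mu} with $\nu=\alpha+1$, $\mu=\alpha$ (gaining a factor~$t^2$) followed by Lemma~\ref{lem: stability A}; second, $\int_0^t\|\Mult\vec{B}_1(u_h)\|^2\,ds$ via the identity $\Mult\vec{B}_1=\vec{B}_2+\alpha\fInt\vec{B}_1$ implicit in~\eqref{eq: M B}, combined with the second estimate of Lemma~\ref{lem: B integrals}, Lemma~\ref{lem: I nu mu} applied to $\fInt\vec{B}_1$, and Lemma~\ref{lem: stability A}; and third, $\int_0^t\iprod{\Mult u_{0h},\fInt^\alpha\Mult u_{0h}}\,ds$, which evaluates explicitly because $u_{0h}$ is time-independent. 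Any stray factor of~$t^\alpha$ is absorbed into the $T$-dependent constant. This proves the first bound, and the second follows immediately by applying~\eqref{eq: fractional D} to $\phi=\Mult u_h$.

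The only real difficulty is the commutator term $\alpha\fInt^{\alpha+1}\nabla u_h$, which has no analogue in the proof of Lemma~\ref{lem: stability A}; fortunately it involves $\nabla u_h$ rather than $\Mult\nabla u_h$, so Lemma~\ref{lem: stability A} supplies a ready-made bound. A welcome consequence of this structure is that, unlike Lemma~\ref{lem: stability A}, no generalised Gronwall inequality is required here: every $u_h$-dependent quantity on the right-hand side has already been controlled, so the estimate for $\Mult u_h$ is obtained by direct absorption.
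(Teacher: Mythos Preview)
Your argument is correct and slightly more direct than the paper's.  After arriving at~\eqref{eq: Muh eqn}, the paper does \emph{not} bound the commutator term~$\alpha\iprod{\fInt^{\alpha+1}\nabla u_h,\nabla\chi}$ by Young's inequality; instead it integrates~\eqref{eq: I FEM} once more in time to obtain an identity expressing $\iprod{\fInt^{\alpha+1}\nabla u_h,\nabla\chi}$ in terms of $\fInt^1u_h$, $\Mult u_{0h}$ and $\fInt^1\vec{B}_1(u_h)$, and substitutes this back.  The substitution collapses the pair $\Mult\vec{B}_1$, $\alpha\fInt^1\vec{B}_1$ into the single operator~$\vec{B}_2$ (indeed, this is why $\vec{B}_2$ was introduced), and trades the gradient term for a lower-order term~$\alpha\fInt^1u_h$ handled via~\eqref{eq: fractional E} and Lemma~\ref{lem: I nu mu}.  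You instead keep the $\fInt^{\alpha+1}\nabla u_h$ term on the right and control it using the bound on $\int_0^t\|\fInt^\alpha\nabla u_h\|^2\,ds$ already supplied by Lemma~\ref{lem: stability A}; this bypasses the substitution step entirely and is arguably cleaner.  Both routes avoid the Gronwall argument, as you note.

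One small caveat: Lemma~\ref{lem: I nu mu} is stated only for $0\le\mu\le\nu\le1$, so invoking it with $\nu=\alpha+1$ is not quite legitimate.  The fix is immediate: write $\fInt^{\alpha+1}\nabla u_h=\fInt^1(\fInt^\alpha\nabla u_h)$ and apply the lemma with $\nu=1$, $\mu=0$ to $\phi=\fInt^\alpha\nabla u_h$, which gives the same factor~$t^2$.
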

\begin{proof}
We multiply both sides of~\eqref{eq: I FEM} by~$t$, and then use
\eqref{eq: M I}, to obtain
\begin{multline}\label{eq: Muh eqn}
\iprod{\Mult u_h,\chi}
	+\iprod{\fInt^\alpha\Mult\nabla u_h,\nabla\chi}
	+\alpha\iprod{\fInt^{\alpha+1}\nabla u_h,\nabla\chi}\\
	-\iprod{\Mult\vec{B}_1(u_h),\nabla\chi}
	=\iprod{\Mult u_{0h},\chi}.
\end{multline}
By integrating \eqref{eq: I FEM} in time, we find that
\[
\iprod{\fInt^{\alpha+1}\nabla u_h,\nabla\chi}
	=\iprod{\fInt^1(u_{0h}-u_h),\chi}
+\iprod{\fInt^1 \vec{B}_1(u_h), \nabla\chi},
\]
and so, noting that $\fInt^1u_{0h}=\Mult u_{0h}$,
\begin{multline*}
\iprod{\Mult u_h,\chi}
    +\iprod{\fInt^\alpha\Mult\nabla u_h,\nabla\chi}
	=\iprod{\vec{B}_2(u_h),\nabla\chi}
	+\iprod{(1-\alpha)\Mult u_{0h}+\alpha\fInt^1u_h,\chi}\\
	\le\tfrac12\|\vec{B}_2(u_h)\|^2+\tfrac12\|\nabla\chi\|^2
	+\iprod{(1-\alpha)\Mult u_{0h}+\alpha\fInt^1u_h,\chi}.
\end{multline*}
Now choose $\chi=\fInt^\alpha\Mult u_h$, cancel the
term~$\tfrac12\|\nabla\chi\|^2$ and integrate in time to arrive at
the estimate
\begin{multline*}
\int_0^t\bigl(\iprod{\Mult u_h,\fInt^\alpha\Mult u_h}
	+\tfrac12\|\fInt^\alpha\Mult\nabla u_h\|^2\bigr)\,ds\\
	\le\frac12\int_0^t\|\vec{B}_2(u_h)\|^2\,ds
		+\int_0^t\iprod{(1-\alpha)\Mult u_{0h}+\alpha\fInt^1u_h,
			\fInt^\alpha\Mult u_h}\,ds.
\end{multline*}
Using \eqref{eq: fractional A} twice, with~$\epsilon=1/4$, we see that
the second term on the right-hand side is bounded by
\[
\frac12\int_0^t\iprod{\Mult u_h,\fInt^\alpha\Mult u_h}\,ds
	+C\int_0^t\iprod{\Mult u_{0h},\fInt^\alpha\Mult u_{0h}}\,ds
	+C\int_0^t\iprod{\fInt^1u_h,\fInt^\alpha\fInt^1u_h}\,ds
\]
so
\begin{multline*}
\int_0^t\bigl(\iprod{\Mult u_h,\fInt^\alpha\Mult u_h}
	+\|\fInt^\alpha\Mult\nabla u_h\|^2\bigr)\,ds
	\le\int_0^t\|\vec{B}_2(u_h)\|^2\,ds\\
	+C\int_0^t\iprod{\Mult u_{0h},\fInt^\alpha\Mult u_{0h}}\,ds
	+C\int_0^t\iprod{\fInt^1u_h,\fInt^\alpha\fInt^1u_h}\,ds.
\end{multline*}
Since $\fInt^\alpha\Mult u_{0h}
=u_{0h}\fInt^\alpha\omega_2=u_{0h}\omega_{\alpha+2}$, we have
\[
\int_0^t\iprod{\Mult u_{0h},\fInt^\alpha\Mult u_{0h}}\,ds
	=Ct^{3+\alpha}\|u_{0h}\|^2,
\]
and, using \eqref{eq: fractional E} followed
by Lemma~\ref{lem: I nu mu} with $\nu=1$~and $\mu=\alpha$,
\[
\int_0^t\iprod{\fInt^1u_h,\fInt^\alpha\fInt^1 u_h}\,ds
	\le Ct^\alpha\int_0^t\|\fInt^1 u_h\|^2\,ds
	\le Ct^{2-\alpha}\int_0^t\|\fInt^\alpha u_h\|^2\,ds.
\]
Thus, by Lemma~\ref{lem: B integrals},
\begin{multline*}
\int_0^t\bigl(\iprod{\Mult u_h,\fInt^\alpha\Mult u_h}
	+\|\fInt^\alpha\Mult\nabla u_h\|^2\bigr)\,ds
	\le Ct^{3+\alpha}\|u_{0h}\|^2\\
	+C\bigl(t^2+t^{2-\alpha}\bigr)
		\int_0^t\|\fInt^\alpha u_h\|^2\,ds,
\end{multline*}
which, when combined with the second estimate from
Lemma~\ref{lem: stability A}, proves the first claim.
The second follows at once thanks to~\eqref{eq: fractional D}.\qed
\end{proof}

Next, we show that $u_h$ may be replaced with~$(\Mult u_h)'$ in
the first estimate of Lemma~\ref{lem: stability A}.

\begin{lemma}\label{lem: stability C}
For $0\le t\le T$,
\[
\int_0^t\bigl(\iprod{(\Mult u_h)',\fInt^\alpha(\Mult u_h)'}
	+\|\fInt^\alpha(\Mult\nabla u_h)'\|^2\bigr)\,ds
	\le Ct^{1+\alpha}\|u_{0h}\|^2.
\]
\end{lemma}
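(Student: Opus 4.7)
The plan is to imitate the strategies of Lemmas \ref{lem: stability A} and \ref{lem: stability B}, this time choosing the test function $\chi = \fInt^\alpha (\Mult u_h)'$ in a variational identity whose leading temporal term is $(\Mult u_h)'$ rather than $u_h$ or $\Mult u_h$. To derive such an identity, I would multiply \eqref{eq: FEM} by~$t$, rewrite $t\,\partial_t u_h = (\Mult u_h)' - u_h$, and reshuffle the fractional operators. Using \eqref{eq: M I} together with $(\fInt^{\alpha+1}\phi)' = \fInt^\alpha \phi$ and $(\Mult \nabla u_h)(0) = \vec{0}$, one obtains
\[
\Mult (\fInt^\alpha \nabla u_h)' = \fInt^\alpha(\Mult \nabla u_h)' - (1-\alpha)\,\fInt^\alpha \nabla u_h,
\]
while the definitions in \eqref{eq: M B} directly give $\Mult \vec{F}\,\partial_t^{1-\alpha} u_h = \vec{B}_3(u_h) - \vec{B}_1(u_h)$. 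Combining these yields
\[
\iprod{(\Mult u_h)', \chi} + \iprod{\fInt^\alpha (\Mult \nabla u_h)', \nabla\chi}
= \iprod{u_h, \chi} + (1-\alpha)\iprod{\fInt^\alpha \nabla u_h, \nabla\chi}
+ \iprod{\vec{B}_3(u_h) - \vec{B}_1(u_h), \nabla\chi}.
\]

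Taking $\chi = \fInt^\alpha(\Mult u_h)'$ (so that $\nabla\chi = \fInt^\alpha(\Mult \nabla u_h)'$) and integrating in~$s$ from $0$ to $t$ places the desired quantity
\[
y(t) := \int_0^t \bigl(\iprod{(\Mult u_h)',\fInt^\alpha (\Mult u_h)'} + \|\fInt^\alpha (\Mult \nabla u_h)'\|^2\bigr)\,ds
\]
on the left. On the right I would bound $\int_0^t \iprod{u_h,\fInt^\alpha(\Mult u_h)'}\,ds$ via \eqref{eq: fractional A}, controlling the $u_h$ half through Lemma \ref{lem: stability A}; treat the $\fInt^\alpha \nabla u_h$ term by Cauchy-Schwarz combined with Young's inequality, again invoking Lemma \ref{lem: stability A}; and similarly dispatch $\iprod{\vec{B}_3(u_h)-\vec{B}_1(u_h),\fInt^\alpha(\Mult \nabla u_h)'}$ by reducing $\int_0^t\|\vec{B}_1(u_h)\|^2\,ds$ and $\int_0^t\|\vec{B}_3(u_h)\|^2\,ds$ through Lemma \ref{lem: B integrals} to norms governed by Lemmas \ref{lem: stability A} and \ref{lem: stability B}. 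In each case the squared-norm half involving $\fInt^\alpha(\Mult u_h)'$ or $\fInt^\alpha(\Mult \nabla u_h)'$ is absorbed into $y(t)$ by choosing the Young parameter $\epsilon$ sufficiently small.

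The main obstacle is the $\vec{B}_3$ contribution: the third bound in Lemma \ref{lem: B integrals} involves $\int_0^t \|\fInt^\alpha(\Mult u_h)'\|^2\,ds$ itself, creating a circular dependence on $y$. I would close the argument exactly as in Lemma \ref{lem: stability A}, applying Lemma \ref{lem: y(t)} with $\phi = (\Mult u_h)'$ to re-express this problematic integral in terms of $z(s) := \int_0^s \iprod{(\Mult u_h)',\fInt^\alpha(\Mult u_h)'}\,d\tau \le y(s)$, yielding a generalised Gronwall inequality of the form
\[
y(t) \le C t^{1+\alpha}\|u_{0h}\|^2 + C t^{\alpha/2}\int_0^t \omega_{\alpha/2}(t-s)\,y(s)\,ds.
\]
The Dixon-McKee inequality, applied just as in \eqref{eq: y(t) A}-\eqref{eq: y(t) B}, then delivers $y(t) \le C t^{1+\alpha}\|u_{0h}\|^2$, which is the claimed estimate.
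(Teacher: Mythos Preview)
Your proposal is correct and follows essentially the same approach as the paper: derive a variational identity with leading term~$\iprod{(\Mult u_h)',\chi}$, test with $\chi=\fInt^\alpha(\Mult u_h)'$, bound the right-hand side via Lemmas~\ref{lem: B integrals}, \ref{lem: stability A}, \ref{lem: stability B}, and close with Lemma~\ref{lem: y(t)} and the Dixon--McKee Gronwall inequality. The only cosmetic difference is that the paper obtains the identity by differentiating~\eqref{eq: Muh eqn} (giving $\iprod{u_{0h},\chi}$ and $-\alpha\iprod{\fInt^\alpha\nabla u_h,\nabla\chi}$ on the right) whereas you multiply~\eqref{eq: FEM} itself by~$t$ (giving $\iprod{u_h,\chi}$, $(1-\alpha)\iprod{\fInt^\alpha\nabla u_h,\nabla\chi}$, and an extra~$-\vec{B}_1(u_h)$); the two identities differ by~\eqref{eq: I FEM} and the resulting right-hand terms are handled by the same lemmas.
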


\begin{proof}
Differentiate \eqref{eq: Muh eqn} to obtain
\[
\iprod{(\Mult u_h)',\chi}
	+\iprod{\partial_t^{1-\alpha}\Mult\nabla u_h,\nabla\chi}
	+\iprod{\alpha\fInt^\alpha\nabla u_h-\vec{B}_3(u_h),\nabla\chi}
	=\iprod{u_{0h},\chi},
\]
and note that
\[
\bigl|
\iprod{\alpha\fInt^\alpha\nabla u_h-\vec{B}_3(u_h),\nabla\chi}
\bigr|\le\tfrac12\|\nabla\chi\|^2
	+\|\vec{B}_3(u_h)\|^2+\alpha^2\|\fInt^\alpha\nabla u_h\|^2.
\]
We choose
$\chi=\partial_t^{1-\alpha}\Mult u_h=(\fInt^\alpha\Mult u_h)'$, and
observe that $(\Mult u_h)(0)=0$ so \eqref{eq: Caputo} implies
$\chi=\fInt^\alpha(\Mult u_h)'$.  Thus,
\begin{multline*}
\iprod{(\Mult u_h)',\fInt^\alpha(\Mult u_h)'}
	+\tfrac12\|\fInt^\alpha(\Mult\nabla u_h)'\|^2\\
	\le\iprod{u_{0h},\fInt^\alpha(\Mult u_h)'}
	+\|\vec{B}_3(u_h)\|^2+\|\fInt^\alpha\nabla u_h\|^2.
\end{multline*}
By~\eqref{eq: fractional A},
\[
\int_0^t \iprod{u_{0h},\fInt^\alpha(\Mult u_h)'}\,ds
	\le\frac12\int_0^t
		\iprod{(\Mult u_h)',\fInt^\alpha(\Mult u_h)'}\,ds
	+C\int_0^t\iprod{u_{0h},\fInt^\alpha u_{0h}}\,ds,
\]
so by Lemma~\ref{lem: B integrals},
\begin{multline*}
y(t):=\int_0^t\bigl(\iprod{(\Mult u_h)',\fInt^\alpha(\Mult u_h)'}
	+\|\fInt^\alpha(\Mult\nabla u_h)'\|^2\bigr)\,ds
	\le C\int_0^t\iprod{u_{0h},\fInt^\alpha u_{0h}}\,ds\\
	 +C\int_0^t\bigl(\|\fInt^\alpha\nabla u_h\|^2
		+\|\fInt^\alpha\Mult u_h\|^2
		+\|\fInt^\alpha u_h\|^2\bigr)\,ds
		+C\int_0^t\|\fInt^\alpha(\Mult u_h)'\|^2\,ds.
\end{multline*}
The first integral on the right-hand side is bounded
by~$Ct^{1+\alpha}\|u_{0h}\|^2$, and so is the second via
Lemmas \ref{lem: stability A}~and \ref{lem: stability B}.  It follows
using Lemma~\ref{lem: y(t)} that $y(t)$ satisfies an inequality of the
form~\eqref{eq: y(t) A} with $a(t)=Ct^{1+\alpha}\|u_{0h}\|^2$~and
$b(t)=Ct^{\alpha/2}$, so \eqref{eq: y(t) B} holds, proving the result.
\qed
\end{proof}

The stability of~$u_h(t)$ in~$L_2(\Omega)$ now follows.

\begin{theorem}\label{thm: stability}
There is a constant~$C$, depending on $\alpha$, $T$~and $\vec{F}$,
such that
\[
\|u_h(t)\|\le C\|u_{0h}\|\quad\text{for $0\le t\le T$.}
\]
\end{theorem}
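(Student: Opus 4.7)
The plan is to combine Lemma~\ref{lem: phi(t)-phi(0)} with Lemma~\ref{lem: stability C} applied to $\phi=\Mult u_h$. The key observation is that $\Mult u_h$ vanishes at $t=0$, so the ``$\phi(t)-\phi(0)$'' estimate becomes a bound on $\|\Mult u_h(t)\|=t\|u_h(t)\|$ itself, which is exactly the quantity we need after we divide through by~$t^2$.

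More precisely, I would set $\phi=\Mult u_h$ in Lemma~\ref{lem: phi(t)-phi(0)}. Since $\phi(0)=0\cdot u_h(0)=0$, the lemma gives
\[
t^2\|u_h(t)\|^2=\|(\Mult u_h)(t)\|^2
\le\frac{t^{1-\alpha}}{(1-\alpha)^2}\int_0^t
	\iprod{(\Mult u_h)'(s),\bigl(\fInt^\alpha(\Mult u_h)'\bigr)(s)}\,ds.
\]
Lemma~\ref{lem: stability C} bounds the integral on the right-hand side by $Ct^{1+\alpha}\|u_{0h}\|^2$, so combining the two estimates yields
\[
t^2\|u_h(t)\|^2\le Ct^2\|u_{0h}\|^2\quad\text{for $0\le t\le T$.}
\]
Dividing by~$t^2$ gives $\|u_h(t)\|\le C\|u_{0h}\|$ for all $t>0$, and the case $t=0$ is trivial since $u_h(0)=u_{0h}$. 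The constant~$C$ inherits the $\alpha$-, $T$- and $\vec{F}$-dependence from Lemma~\ref{lem: stability C}.

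The real work has already been done in Lemmas~\ref{lem: stability A}--\ref{lem: stability C}; the only subtle point in the wrap-up is the choice $\phi=\Mult u_h$, whose vanishing at $t=0$ is precisely what allows the time-weight $t^{1-\alpha}/(1-\alpha)^2$ from Lemma~\ref{lem: phi(t)-phi(0)} to cancel against the factor $t^{1+\alpha}$ coming from Lemma~\ref{lem: stability C}, leaving a clean $t^2$ on both sides. Had we instead tried to apply Lemma~\ref{lem: phi(t)-phi(0)} directly to $\phi=u_h$, we would face the obstacle that $u_h(0)=u_{0h}\ne0$ and, more importantly, we would lack any bound of the required form on $\int_0^t\iprod{u_h',\fInt^\alpha u_h'}\,ds$—which is exactly why the preparatory passage through $\Mult u_h$ and $(\Mult u_h)'$ was needed.
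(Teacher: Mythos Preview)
Your argument is correct and matches the paper's proof essentially line for line: apply Lemma~\ref{lem: phi(t)-phi(0)} with $\phi=\Mult u_h$ (using $\phi(0)=0$), then invoke Lemma~\ref{lem: stability C} to bound the resulting integral by $Ct^{1+\alpha}\|u_{0h}\|^2$, and divide through by~$t^2$. Your added commentary on why the weight $\Mult$ is needed is accurate and helpful but goes beyond what the paper records.
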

\begin{proof}
Using Lemma~\ref{lem: phi(t)-phi(0)} with~$\phi=\Mult u_h$, followed
by Lemma~\ref{lem: stability C}, we obtain
\[
t^2\|u_h(t)\|^2=
\|(\Mult u_h)(t)\|^2\le Ct^{1-\alpha}
	\int_0^t\bigiprod{(\Mult u_h)',\fInt^\alpha(\Mult u_h)'}\,ds
	\le Ct^2\|u_{0h}\|^2.
\]
\qed
\end{proof}

{ Because some of the estimates of Section~\ref{sec: fractional}
break down as~$\alpha\to1$, the same is true of the stability result
above.  That is, the proof of Theorem~\ref{thm: stability} yields a
constant~$C$ that tends to infinity as~$\alpha\to1$.  However, we
can easily prove stability in the limiting case when~$\alpha=1$, that
is, when \eqref{eq: FFP} reduces to the classical Fokker--Planck
equation,
\[
\partial_tu+\nabla\cdot(\nabla u-\vec{F} u)=0,
\]
and the finite element equation~\eqref{eq: FEM} to
\[
\iprod{\partial_tu_h,\chi}+\iprod{\nabla u_h,\nabla\chi}
	-\iprod{\vec{F} u_h,\nabla\chi}=0.
\]}
\section{Error estimate}\label{sec: error}
We now seek to estimate the accuracy of the semidiscrete finite
element solution~$u_h$. Recall that the Ritz projection~$R_hv\in\Sh$
of a function~$v\in H^1(\Omega)$ is defined by
\[
\iprod{\nabla R_hv,\nabla\chi}+\iprod{R_hv,\chi}
	=\iprod{\nabla v,\nabla\chi}+\iprod{v,\chi}
	\quad\text{for all $\chi\in\Sh$;}
\]
here, the lower-order terms are included to allow for a
zero-flux boundary condition~\eqref{eq: Neuman bc}, in which case
the functions in~$\Sh$ do not have to vanish on~$\partial\Omega$ and
so the Poincar\'e inequality is not applicable.  Since the Galerkin
finite element method is quasi-optimal in~$H^1(\Omega)$, we know that
$\|v-R_hv\|_1\le Ch\|v\|_2$ for $v\in H^2(\Omega)$. Assuming that
$\Omega$ is convex, so that the Poisson problem is $H^2$-regular, the
usual duality argument implies that
\begin{equation}\label{eq: R_h error}
\|v-R_hv\|\le Ch^2\|v\|_2\quad\text{for $v\in H^2(\Omega)$.}
\end{equation}

We now decompose the error into
\begin{equation}\label{eq: eh}
e_h=u_h-u=\theta_h-\rho_h\quad\text{where}\quad
\theta_h=u_h-R_hu\quad\text{and}\quad\rho_h=u-R_hu,
\end{equation}
and deduce from \eqref{eq: FFP weak}~and \eqref{eq: FEM} that
\begin{equation}\label{eq: theta_h}
\iprod{\theta_h',\chi}
	+\iprod{\partial_t^{1-\alpha}\nabla\theta_h,\nabla\chi}
	-\iprod{\vec{F}\partial_t^{1-\alpha}\theta_h,\nabla\chi}
	=\iprod{\rho_h'-\partial_t^{1-\alpha}\rho_h,\chi}
	-\iprod{\vec{F}\partial_t^{1-\alpha}\rho_h,\nabla\chi}.
\end{equation}
With this equation, we can use the techniques of
Section~\ref{sec: stability} to estimate $\theta_h$ in terms
of~$\rho_h$. The next lemma provides our basic estimate for the
latter.

\begin{lemma}\label{lem: I beta rho}
Let $\beta\ge0$~and $0\le r\le2$. If $u$ has the regularity
property~\eqref{eq: u reg}, then
\[
\|\fInt^\beta\rho_h\|+\|\fInt^\beta(\Mult\rho_h')\|
	\le Ct^{\beta+\alpha(r-2)/2}h^2K_r\quad\text{for $0<t\le T$.}
\]
\end{lemma}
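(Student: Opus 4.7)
The plan is to reduce the two bounds to a single pointwise estimate for a scalar majorant and then evaluate the resulting fractional integral via the Beta function. Since the Ritz projection is a bounded linear operator on $H^1(\Omega)$ that does not depend on~$t$, differentiation in~$t$ commutes with $R_h$, so $\rho_h'(s)=u'(s)-R_hu'(s)$. The duality estimate \eqref{eq: R_h error} therefore gives
\[
\|\rho_h(s)\|\le Ch^2\|u(s)\|_2 \quad\text{and}\quad
\|\rho_h'(s)\|\le Ch^2\|u'(s)\|_2.
\]
Combined with the regularity hypothesis \eqref{eq: u reg}, this yields the pointwise bounds
\[
\|\rho_h(s)\|\le Ch^2K_r s^{\alpha(r-2)/2}
\quad\text{and}\quad
\|(\Mult\rho_h')(s)\|=s\|\rho_h'(s)\|\le Ch^2K_r s^{\alpha(r-2)/2},
\]
so both quantities are dominated by the same power of~$s$.

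Next, for $\beta>0$ I would apply the triangle inequality inside the convolution defining~$\fInt^\beta$,
\[
\|\fInt^\beta\phi(t)\|\le\int_0^t\omega_\beta(t-s)\|\phi(s)\|\,ds,
\]
and insert the estimate above with $\phi=\rho_h$ or $\phi=\Mult\rho_h'$. The resulting integral reduces to the standard Beta identity
\[
\int_0^t(t-s)^{\beta-1}s^{\alpha(r-2)/2}\,ds
=t^{\beta+\alpha(r-2)/2}B\bigl(\beta,1+\tfrac{\alpha(r-2)}{2}\bigr).
\]
This computation is licit provided $1+\alpha(r-2)/2>0$, which holds since $0<\alpha<1$ and $r\ge0$ force $\alpha(r-2)/2\ge-\alpha>-1$. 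Collecting the $\beta$-dependent and $T$-dependent constants into one $C$ gives the claimed bound. The case $\beta=0$ is trivial, since $\fInt^0$ is the identity and the pointwise bounds above already give the required estimate.

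There is no real obstacle here: the argument is a direct calculation, and the only thing to verify carefully is the convergence of the Beta integral at $s=0$, which I have just checked. I would take some care to note that the constant~$C$ depends on $\alpha$, $r$, $\beta$, and $T$, but not on $h$ or on $u_0$ (the latter entering only through~$K_r$).
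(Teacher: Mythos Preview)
Your proof is correct and follows essentially the same route as the paper's: both apply the Ritz projection estimate \eqref{eq: R_h error} pointwise in~$s$, combine with the regularity hypothesis \eqref{eq: u reg}, and then bound the convolution integral; the paper uses the substitution $s=ty$ where you invoke the Beta identity, which amounts to the same computation. Your explicit check that $1+\alpha(r-2)/2>0$ is a nice touch, and noting that $R_h$ commutes with~$\partial_t$ makes the argument for~$\rho_h'$ slightly more explicit than the paper's.
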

\begin{proof}
For the case~$\beta=0$, we see from~\eqref{eq: R_h error} that
\[
\|\rho_h(t)\|+\|\Mult\rho_h'(t)\|\le
	Ch^2\bigl(\|u(t)\|_2+t\|u'(t)\|_2\bigr)
	\le Ct^{\alpha(r-2)/2}h^2 K_r,
\]
whereas for~$\beta>0$,
\begin{align*}
\|\fInt^\beta\rho_h(t)\|+\|\fInt^\beta(\Mult\rho'_h)\|
	&\le\int_0^t\omega_\beta(t-s)
		\bigl(\|\rho_h(s)\|+s\|\rho_h'(s)\|\bigr)\,ds\\
	&\le C\int_0^t(t-s)^{\beta-1}\,
		s^{\alpha(r-2)/2}h^2K_r\,ds,
\end{align*}
and the result follows after making the substitution~$s=ty$
for~$0\le y\le 1$. \qed
\end{proof}

The proofs of Lemmas \ref{lem: error A}~and \ref{lem: error B} below
parallel those of Lemmas \ref{lem: stability A}~and
\ref{lem: stability B} from Section~\ref{sec: stability}.  We let
$P_h$ denote $L_2$-projector onto the finite element subspace~$\Sh$,
that is, for any~$v\in L_2(\Omega)$ we define $P_hv\in\Sh$
by~$\iprod{P_hv,\chi}=\iprod{v,\chi}$ for all~$\chi\in\Sh$.

\begin{lemma}\label{lem: error A}
If $u_{0h}=P_hu_0$ then, for $0\le t\le T$~and $0\le r\le2$,
\[
\int_0^t\bigl(\iprod{\theta_h,\fInt^\alpha\theta_h}
	+\|\fInt^\alpha\nabla\theta_h\|^2\bigr)\,ds\le Ct^{1+\alpha(r-1)}
		h^4K_r^2
\]
and
\[
\int_0^t\|\fInt^\alpha\theta_h\|^2\,ds
	\le Ct^{1+\alpha r}h^4K_r^2.
\]
\end{lemma}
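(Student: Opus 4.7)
The plan is to imitate the proof of Lemma~\ref{lem: stability A}, treating the $\rho_h$ contributions on the right-hand side of the modified equation for~$\theta_h$ as data. First I would integrate~\eqref{eq: theta_h} in time, using $\fInt^\alpha(\nabla\theta_h)(0)=0$ and $\int_0^t\partial_s^{1-\alpha}\rho_h\,ds=\fInt^\alpha\rho_h(t)$, together with the definition of $\vec{B}_1$, to obtain
\[
\iprod{\theta_h(t),\chi}+\iprod{\fInt^\alpha\nabla\theta_h,\nabla\chi}
-\iprod{\vec{B}_1(\theta_h),\nabla\chi}
=\iprod{\theta_h(0)-\rho_h(0)+\rho_h(t),\chi}
-\iprod{\fInt^\alpha\rho_h,\chi}-\iprod{\vec{B}_1(\rho_h),\nabla\chi}.
\]
The choice $u_{0h}=P_hu_0$ is crucial here: it makes $\theta_h(0)=P_hu_0-R_hu_0$, so that $\iprod{\theta_h(0)-\rho_h(0),\chi}=\iprod{P_h\rho_h(0)-\rho_h(0),\chi}=0$ for all $\chi\in\Sh$, eliminating the initial error terms.

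Next I would test with $\chi=\fInt^\alpha\theta_h$ and apply Cauchy--Schwarz/Young's inequality to absorb $\|\fInt^\alpha\nabla\theta_h\|^2$ from the $\vec{B}_1(\theta_h)$ and $\vec{B}_1(\rho_h)$ contributions, then integrate in time and bound the $\vec{B}_1$ terms using Lemma~\ref{lem: B integrals}. The two right-hand side terms involving $\rho_h$ and $\fInt^\alpha\rho_h$ paired against $\fInt^\alpha\theta_h$ are handled by \eqref{eq: fractional A} (with $\phi=\rho_h$, $\psi=\theta_h$, and a small~$\epsilon$ to absorb $\int_0^t\iprod{\theta_h,\fInt^\alpha\theta_h}\,ds$) and by Cauchy--Schwarz to absorb $\int_0^t\|\fInt^\alpha\theta_h\|^2\,ds$. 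This should yield
\[
y(t):=\int_0^t\bigl(\iprod{\theta_h,\fInt^\alpha\theta_h}+\|\fInt^\alpha\nabla\theta_h\|^2\bigr)\,ds
\le C\int_0^t\|\fInt^\alpha\theta_h\|^2\,ds
+C\int_0^t\bigl(\iprod{\rho_h,\fInt^\alpha\rho_h}+\|\fInt^\alpha\rho_h\|^2\bigr)\,ds.
\]

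The data terms are controlled by Lemma~\ref{lem: I beta rho} together with \eqref{eq: fractional E}: with $\beta=0$ one gets $\|\rho_h\|^2\le Ct^{\alpha(r-2)}h^4K_r^2$, so \eqref{eq: fractional E} gives $\int_0^t\iprod{\rho_h,\fInt^\alpha\rho_h}\,ds\le Ct^{1+\alpha(r-1)}h^4K_r^2$, while $\beta=\alpha$ gives $\|\fInt^\alpha\rho_h\|\le Ct^{\alpha r/2}h^2K_r$, hence $\int_0^t\|\fInt^\alpha\rho_h\|^2\,ds\le Ct^{1+\alpha r}h^4K_r^2\le CT^\alpha t^{1+\alpha(r-1)}h^4K_r^2$. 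Together these give $a(t)=Ct^{1+\alpha(r-1)}h^4K_r^2$ on the right-hand side. To eliminate the remaining $\int_0^t\|\fInt^\alpha\theta_h\|^2\,ds$, I would use Lemma~\ref{lem: y(t)} to dominate it by $Ct^{\alpha/2}\int_0^t\omega_{\alpha/2}(t-s)y(s)\,ds$, yielding exactly the Dixon--McKee form \eqref{eq: y(t) A}; then \eqref{eq: y(t) B} delivers the first estimate $y(t)\le Ca(t)$, and the second estimate then drops out by applying \eqref{eq: fractional D} to the first.

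The main obstacle is keeping the powers of~$t$ straight across all the $\rho_h$-type terms and verifying that every one of them is bounded, up to a $T$-dependent constant, by the single envelope $Ct^{1+\alpha(r-1)}h^4K_r^2$; one must also check integrability of $s^{\alpha(r-2)}$ at $s=0$ (which holds since $\alpha(2-r)<2$) and cleanly separate the coefficients $\epsilon$ in the several applications of~\eqref{eq: fractional A} so that the three quadratic quantities $\iprod{\theta_h,\fInt^\alpha\theta_h}$, $\|\fInt^\alpha\nabla\theta_h\|^2$, and $\|\fInt^\alpha\theta_h\|^2$ can all be absorbed or handed off to the Gronwall step.
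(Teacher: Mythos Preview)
Your overall strategy coincides with the paper's: integrate \eqref{eq: theta_h}, test with $\chi=\fInt^\alpha\theta_h$, absorb the gradient term, control the $\vec B_1$ contributions via Lemma~\ref{lem: B integrals}, push the $\rho_h$-type terms into a data function~$a(t)$, feed the leftover $\int_0^t\|\fInt^\alpha\theta_h\|^2\,ds$ through Lemma~\ref{lem: y(t)} into a Dixon--McKee inequality, and finish with \eqref{eq: fractional D}. The paper keeps $\tilde\rho_h=\rho_h-\fInt^\alpha\rho_h$ as a single object and applies \eqref{eq: fractional A} once with $\phi=\tilde\rho_h$, whereas you split it; this is a cosmetic difference.

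There is, however, a genuine gap in your bound for $\int_0^t\iprod{\rho_h,\fInt^\alpha\rho_h}\,ds$. You invoke \eqref{eq: fractional E}, which requires $\int_0^t\|\rho_h(s)\|^2\,ds<\infty$, and your integrability check (``holds since $\alpha(2-r)<2$'') is the wrong condition: that is the condition for $s^{\alpha(r-2)/2}$ to be integrable, not $s^{\alpha(r-2)}$. Integrability of $\|\rho_h(s)\|^2\le C\eta^2 s^{\alpha(r-2)}$ near $s=0$ needs $\alpha(2-r)<1$, which fails for any $\alpha\ge\tfrac12$ and $r$ sufficiently small (e.g.\ $\alpha=0.75$, $r=0$ gives $\alpha(2-r)=1.5$). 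So \eqref{eq: fractional E} is not available across the full range $0\le r\le2$, $0<\alpha<1$.

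The fix is easy and is exactly what the paper does: bound $\iprod{\rho_h,\fInt^\alpha\rho_h}$ (or $\iprod{\tilde\rho_h,\fInt^\alpha\tilde\rho_h}$) pointwise by $\|\rho_h\|\,\|\fInt^\alpha\rho_h\|\le C\eta^2 s^{\alpha(r-2)/2}\cdot s^{\alpha r/2}=C\eta^2 s^{\alpha(r-1)}$ using Lemma~\ref{lem: I beta rho} with $\beta=0$ and $\beta=\alpha$. Since $\alpha(1-r)\le\alpha<1$, the resulting integral $\int_0^t s^{\alpha(r-1)}\,ds$ converges for all admissible $\alpha,r$, and you recover $a(t)\le C\eta^2 t^{1+\alpha(r-1)}$ as desired. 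With this correction, your argument goes through and matches the paper's.
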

\begin{proof}
We integrate \eqref{eq: theta_h} in time to obtain
\begin{equation}\label{eq: I theta_h}
\iprod{\theta_h,\chi}
	+\iprod{\fInt^\alpha\nabla\theta_h,\nabla\chi}
	-\iprod{\vec{B}_1(\theta_h),\nabla\chi}
	=\iprod{e_h(0),\chi}
	+\iprod{\tilde\rho_h,\chi}
	-\iprod{\vec{B}_1(\rho_h),\nabla\chi},
\end{equation}
where $\tilde\rho_h=\rho_h-\fInt^\alpha\rho_h$.
Our choice of~$u_{0h}$ means that $\iprod{e_h(0),\chi}=0$, so by
letting $\chi=\fInt^\alpha\theta_h$ and recalling the
definitions~\eqref{eq: M B}, we see that
\[
\iprod{\theta_h,\fInt^\alpha\theta_h}
	+\|\fInt^\alpha\nabla\theta_h\|^2
\le\|\vec{B}_1(\theta_h)\|^2+\|\vec{B}_1(\rho_h)\|^2
	+\tfrac12\|\fInt^\alpha\nabla\theta_h\|^2
	+\iprod{\tilde\rho_h,\fInt^\alpha\theta_h}.
\]
Thus, by Lemma~\ref{lem: B integrals},
\begin{multline*}
\int_0^t\bigl(\iprod{\theta_h,\fInt^\alpha\theta_h}
	+\tfrac12\|\fInt^\alpha\nabla\theta_h\|^2\bigr)\,ds
	\le C\int_0^t\|\fInt^\alpha\theta_h\|^2\,ds\\
	+C\int_0^t\|\fInt^\alpha\rho_h\|^2\,ds
+\int_0^t\iprod{\tilde\rho_h,\fInt^\alpha\theta_h}\,ds.
\end{multline*}
After applying \eqref{eq: fractional A} with $\phi=\tilde\rho_h$~and
$\psi=\theta_h$, followed by Lemma~\ref{lem: y(t)}
with~$\phi=\theta_h$, we see that the function
\[
y(t)=\int_0^t\bigl(\iprod{\theta_h,\fInt^\alpha\theta_h}
	+\|\fInt^\alpha\nabla\theta_h\|^2\bigr)\,ds
\]
satisfies an inequality of the form~\eqref{eq: y(t) A} with
\[
a(t)=C\int_0^t\iprod{\tilde\rho_h,\fInt^\alpha\tilde\rho_h}\,ds
	+C\int_0^t\|\fInt^\alpha\rho_h\|^2\,ds
\quad\text{and}\quad
b(t)=Ct^{\alpha/2}.
\]
For brevity, put $\eta=h^2K_r$.  By Lemma~\ref{lem: I beta rho},
\[
\bigl|\iprod{\tilde\rho_h,\fInt^\alpha\tilde\rho_h}\bigr|
	\le C\eta^2(1+t^\alpha)t^{\alpha(r-2)/2}
		(1+t^\alpha)t^{\alpha+\alpha(r-2)/2}
	\le C\eta^2t^{\alpha(r-1)}
\]
and $\|\fInt^\alpha\rho_h\|^2\le
C\bigl(\eta t^{\alpha+\alpha(r-2)/2}\bigr)^2=C\eta^2t^{\alpha r}$, so
$a(t)\le C\eta^2t^{\alpha(r-1)+1}$.  Thus, the two estimates follow
from~\eqref{eq: y(t) B} followed by~\eqref{eq: fractional D}.\qed
\end{proof}

\begin{lemma}\label{lem: error B}
If $u_{0h}=P_hu_0$ then, for $0\le t\le T$ and $0\le r\le2$,
\[
\int_0^t\bigl(\iprod{\Mult\theta_h,\fInt^\alpha\Mult\theta_h}
	+\|\fInt^\alpha\Mult\nabla\theta_h\|^2\bigr)\,ds
	\le Ct^{3+\alpha(r-1)}h^4K_r^2
\]
and
\[
\int_0^t\|\fInt^\alpha\Mult\theta_h\|^2\,ds
	\le Ct^{3+\alpha r}h^4K_r^2.
\]
\end{lemma}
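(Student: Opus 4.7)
The plan is to mimic the proof of Lemma~\ref{lem: stability B} with $u_h$ replaced by $\theta_h$, carrying along the $\rho_h$-driven forcing that appears on the right-hand side of~\eqref{eq: I theta_h}. First I would multiply~\eqref{eq: I theta_h} by~$t$, apply the commutator identity~\eqref{eq: M I} to move $\Mult$ inside $\fInt^\alpha$, and eliminate the resulting $\alpha\iprod{\fInt^{\alpha+1}\nabla\theta_h,\nabla\chi}$ term by reading it off from $\fInt^1$ applied to~\eqref{eq: I theta_h}. Grouping $\Mult\vec{B}_1-\alpha\fInt^1\vec{B}_1=\vec{B}_2$ as in~\eqref{eq: M B}, and using $\iprod{e_h(0),\chi}=0$ (which is the only place the choice $u_{0h}=P_hu_0$ enters), this should produce the identity
\[
\iprod{\Mult\theta_h,\chi}+\iprod{\fInt^\alpha\Mult\nabla\theta_h,\nabla\chi}
=\iprod{\vec{B}_2(\theta_h)-\vec{B}_2(\rho_h),\nabla\chi}+\iprod{g_h,\chi},
\]
where $g_h=\Mult\tilde\rho_h+\alpha\fInt^1\theta_h-\alpha\fInt^1\tilde\rho_h$.

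Next I would test with $\chi=\fInt^\alpha\Mult\theta_h$, apply Young's inequality to the $\vec{B}_2$ term to absorb $\tfrac12\|\fInt^\alpha\Mult\nabla\theta_h\|^2$, integrate over~$(0,t)$, and apply~\eqref{eq: fractional A} piecewise (with small $\epsilon$) to each of the three contributions to $\int_0^t\iprod{g_h,\fInt^\alpha\Mult\theta_h}\,ds$, absorbing $\tfrac12\int_0^t\iprod{\Mult\theta_h,\fInt^\alpha\Mult\theta_h}\,ds$ into the left-hand side. What remains on the right is a sum of five terms: $\int_0^t\|\vec{B}_2(\theta_h)\|^2\,ds$, $\int_0^t\|\vec{B}_2(\rho_h)\|^2\,ds$, $\int_0^t\iprod{\Mult\tilde\rho_h,\fInt^\alpha\Mult\tilde\rho_h}\,ds$, $\int_0^t\iprod{\fInt^1\theta_h,\fInt^\alpha\fInt^1\theta_h}\,ds$, and $\int_0^t\iprod{\fInt^1\tilde\rho_h,\fInt^\alpha\fInt^1\tilde\rho_h}\,ds$.

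Then I would bound each of these. For the $\theta_h$-terms, Lemma~\ref{lem: B integrals} with the second estimate of Lemma~\ref{lem: error A} gives $\int_0^t\|\vec{B}_2(\theta_h)\|^2\,ds\le Ct^{3+\alpha r}h^4K_r^2$, while \eqref{eq: fractional E} followed by Lemma~\ref{lem: I nu mu} (with $\nu=1,\mu=\alpha$) and once more Lemma~\ref{lem: error A} yields $\int_0^t\iprod{\fInt^1\theta_h,\fInt^\alpha\fInt^1\theta_h}\,ds\le Ct^{3+\alpha(r-1)}h^4K_r^2$. For the $\rho_h$-terms, Lemma~\ref{lem: I beta rho} supplies the pointwise bounds $\|\fInt^\beta\rho_h(s)\|\le Cs^{\beta+\alpha(r-2)/2}h^2K_r$ for each needed $\beta\in\{0,\alpha,1,1+\alpha\}$; squaring, integrating, and using~\eqref{eq: fractional E} on the $\iprod{\cdot,\fInt^\alpha\cdot}$ pieces reduces all three $\rho_h$-contributions to $Ct^{3+\alpha(r-1)}h^4K_r^2$. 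Since $t\le T$ allows $t^{3+\alpha r}=t^\alpha\cdot t^{3+\alpha(r-1)}\le T^\alpha t^{3+\alpha(r-1)}$, summing these estimates yields the first claim, and applying~\eqref{eq: fractional D} with $\phi=\Mult\theta_h$ converts $\int_0^t\iprod{\Mult\theta_h,\fInt^\alpha\Mult\theta_h}\,ds$ into $\int_0^t\|\fInt^\alpha\Mult\theta_h\|^2\,ds$ at the cost of a factor $Ct^\alpha/(1-\alpha)$, upgrading the exponent from $3+\alpha(r-1)$ to $3+\alpha r$ as required.

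The main obstacle will be the careful accounting of the $t$-powers produced by the five $\rho_h$-forcing terms; each involves a different $\beta$ in Lemma~\ref{lem: I beta rho}, and one must verify that every combination collapses neatly into the target exponent~$3+\alpha(r-1)$. Beyond that, the algebra mirrors Lemma~\ref{lem: stability B} exactly, and since the $\theta_h$-self terms on the right-hand side are already controlled by Lemma~\ref{lem: error A}, no further Gronwall argument via Lemma~\ref{lem: y(t)} is needed at this stage.
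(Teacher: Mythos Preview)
Your proposal is correct and follows essentially the same route as the paper's proof. The only cosmetic difference is that the paper groups the $\tilde\rho_h$-forcing as a single term~$(\Mult-\alpha\fInt^1)\tilde\rho_h$ and applies~\eqref{eq: fractional A} twice rather than three times, and it bounds the resulting $\iprod{(\Mult-\alpha\fInt^1)\tilde\rho_h,\fInt^\alpha(\Mult-\alpha\fInt^1)\tilde\rho_h}$ directly from the pointwise estimates of Lemma~\ref{lem: I beta rho} rather than going through~\eqref{eq: fractional E}; these choices are equivalent and lead to the same exponent~$3+\alpha(r-1)$.
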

\begin{proof}
We multiply both sides of~\eqref{eq: I theta_h} by~$t$, remembering
that $\iprod{e_h(0),\chi}=0$, and then use \eqref{eq: M I} to obtain
\begin{multline}\label{eq: M theta_h eqn}
\iprod{\Mult\theta_h,\chi}
	+\iprod{\fInt^\alpha\Mult\nabla\theta_h,\nabla\chi}
	+\alpha\iprod{\fInt^{\alpha+1}\nabla\theta_h,\nabla\chi}
	-\iprod{\Mult\vec{B}_1(\theta_h),
		\nabla\chi}\\
	=\iprod{\Mult\tilde\rho_h,\chi}
-\iprod{\Mult \vec{B}_1(\rho_h),\nabla\chi}.
\end{multline}
By integrating~\eqref{eq: I theta_h}, we find that
\[
\iprod{\fInt^{\alpha+1}\nabla\theta_h,\nabla\chi}
	=\iprod{\fInt^1\tilde\rho_h-\fInt^1\theta_h,\chi}
	+\iprod{\fInt^1\vec{B}_1(\theta_h)
	-\fInt^1\vec{B}_1(\rho_h),\nabla\chi},
\]
and hence, with~$\vec{B}_2(\phi)$ defined as before
in~\eqref{eq: M B},
\begin{multline*}
\iprod{\Mult\theta_h,\chi}
	+\iprod{\fInt^\alpha\Mult\nabla\theta_h,\nabla\chi}
	=\iprod{\vec{B}_2(\theta_h)-\vec{B}_2(\rho_h),\nabla\chi}\\
	+\iprod{(\Mult-\alpha\fInt^1)\tilde\rho_h
		+\alpha\fInt^1\theta_h,\chi}.
\end{multline*}
Now choose $\chi=\fInt^\alpha\Mult\theta_h$ so that, after cancelling
a term~$\tfrac12\|\nabla\chi\|^2$ and integrating,
\begin{multline*}
\int_0^t\bigl(\iprod{\Mult\theta_h,\fInt^\alpha\Mult\theta_h}
	+\tfrac12\|\fInt^\alpha\Mult\nabla\theta_h\|^2\bigr)\,ds
	\le\frac12\int_0^t\|\vec{B}_2(\theta_h)-\vec{B}_2(\rho_h)\|^2\,ds\\
	+\int_0^t\iprod{(\Mult-\alpha\fInt^1)\tilde\rho_h
		+\alpha\fInt^1\theta_h,\fInt^\alpha\Mult\theta_h}\,ds.
\end{multline*}
Using \eqref{eq: fractional A} with~$\epsilon=1/4$,
$\phi=(\Mult-\alpha\fInt^1)\tilde\rho_h$ and $\psi=\Mult\theta_h$,
and a second time with~$\phi=\alpha\fInt^1\theta_h$, we see that
\begin{multline*}
\int_0^t\bigl(\iprod{\Mult\theta_h,\fInt^\alpha\Mult\theta_h}
	+\|\fInt^\alpha\Mult\nabla\theta_h\|^2\bigr)\,ds
	\le\int_0^t\|\vec{B}_2(\theta_h)-\vec{B}_2(\rho_h)\|^2\,ds\\
	+C\int_0^t\iprod{(\Mult-\alpha\fInt^1)\tilde\rho_h,\fInt^\alpha
	(\Mult-\alpha\fInt^1)\tilde\rho_h}\,ds
	+C\int_0^t\iprod{\fInt^1\theta_h,\fInt^\alpha\fInt^1\theta_h}\,ds.
\end{multline*}
Lemma~\ref{lem: B integrals} implies that
\[
\int_0^t\|\vec{B}_2(\theta_h)-\vec{B}_2(\rho_h)\|^2\,ds
	\le Ct^2\int_0^t\bigl(\|\fInt^\alpha\theta_h\|^2
		+\|\fInt^\alpha\rho_h\|^2\bigr)\,ds
\]
and, putting $\eta=h^2K_r$ as before, we find with the help
of Lemma~\ref{lem: I beta rho} that
\[
\int_0^t
\bigl|\iprod{(\Mult-\alpha\fInt^1)\tilde\rho_h,\fInt^\alpha
	(\Mult-\alpha\fInt^1)\tilde\rho_h}\bigr|
	\le C\eta^2 t^{3+\alpha(r-1)}.
\]
Using \eqref{eq: fractional E}, followed by
Lemma~\ref{lem: I nu mu} with $\nu=1$~and $\mu=\alpha$,
\[
\int_0^t\iprod{\fInt^1\theta_h,\fInt^\alpha\fInt^1\theta_h}\,ds
	\le Ct^\alpha\int_0^t\|\fInt^1\theta_h\|^2\,ds
	\le Ct^{2-\alpha}\int_0^t\|\fInt^\alpha\theta_h\|^2\,ds,
\]
so, recalling that
$\|\fInt^\alpha\rho_h\|^2\le C\eta^2t^{\alpha r}$, the first
estimate follows by Lemma~\ref{lem: error A}.  The second is then an
immediate consequence of~\eqref{eq: fractional D}.
\end{proof}

Techniques like those of Lemma~\ref{lem: stability C}~and
Theorem~\ref{thm: stability} now yield our error bound.

\begin{theorem}\label{thm: error}
If $\Omega$ is convex and the solution of the fractional
Fokker--Planck equation~\eqref{eq: FFP} has the regularity
property~\eqref{eq: u reg}, then the finite element solution, given
by~\eqref{eq: FFP weak}, satisfies
\[
\|u_h(t)-u(t)\|\le C\|u_{0h}-P_hu_0\|+Ct^{\alpha(r-2)/2}h^2K_r
\]
for $0<t\le T$ and $0\le r\le2$. The constant~$C$ may depend on
$\alpha$, $T$~and $\vec{F}$.
\end{theorem}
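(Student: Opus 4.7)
My plan is to combine stability with the error machinery already developed in Lemmas~\ref{lem: error A}--\ref{lem: error B}, in exact parallel to the way Theorem~\ref{thm: stability} was obtained from Lemmas~\ref{lem: stability A}--\ref{lem: stability C}. First I would reduce to the canonical initial datum $u_{0h}=P_hu_0$: by linearity of~\eqref{eq: FEM}, writing $u_h=v_h+w_h$ where $v_h$ and $w_h$ solve~\eqref{eq: FEM} with initial data $P_hu_0$ and $u_{0h}-P_hu_0$ respectively, Theorem~\ref{thm: stability} applied to $w_h$ yields $\|w_h(t)\|\le C\|u_{0h}-P_hu_0\|$. It therefore suffices to prove $\|v_h(t)-u(t)\|\le Ct^{\alpha(r-2)/2}h^2K_r$, and from now on I assume $u_{0h}=P_hu_0$.

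Next, I would decompose $u_h-u=\theta_h-\rho_h$ as in~\eqref{eq: eh}. The projection error is handled immediately by Lemma~\ref{lem: I beta rho} with $\beta=0$, giving $\|\rho_h(t)\|\le Ct^{\alpha(r-2)/2}h^2K_r$. To control $\|\theta_h(t)\|$, I observe that although $\theta_h(0)=P_hu_0-R_hu_0$ is in general nonzero, the product $\Mult\theta_h$ vanishes at $t=0$, so Lemma~\ref{lem: phi(t)-phi(0)} applied with $\phi=\Mult\theta_h$ yields
\begin{equation*}
t^2\|\theta_h(t)\|^2\le\frac{t^{1-\alpha}}{(1-\alpha)^2}
\int_0^t\bigiprod{(\Mult\theta_h)'(s),\fInt^\alpha(\Mult\theta_h)'(s)}\,ds.
\end{equation*}
Matching the target bound therefore reduces to proving the error-theoretic analogue of Lemma~\ref{lem: stability C}, namely
\begin{equation*}
\int_0^t\bigiprod{(\Mult\theta_h)',\fInt^\alpha(\Mult\theta_h)'}\,ds\le Ct^{1+\alpha(r-1)}h^4K_r^2.
\end{equation*}

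I would establish this by mimicking the proof of Lemma~\ref{lem: stability C}. Differentiating the time-multiplied error equation~\eqref{eq: M theta_h eqn} in~$t$ produces
\begin{multline*}
\iprod{(\Mult\theta_h)',\chi}+\iprod{\partial_t^{1-\alpha}\Mult\nabla\theta_h,\nabla\chi}+\iprod{\alpha\fInt^\alpha\nabla\theta_h-\vec{B}_3(\theta_h),\nabla\chi}\\
=\iprod{(\Mult\tilde\rho_h)',\chi}-\iprod{\vec{B}_3(\rho_h),\nabla\chi}.
\end{multline*}
Choosing $\chi=\fInt^\alpha(\Mult\theta_h)'$, which equals $\partial_t^{1-\alpha}\Mult\theta_h$ by~\eqref{eq: Caputo} since $\Mult\theta_h$ vanishes at~$0$, absorbing the resulting $\tfrac12\|\fInt^\alpha(\Mult\nabla\theta_h)'\|^2$ contributions on the right into the left, and integrating over $(0,t)$, I would then dispose of the remaining terms using estimates already in hand: the $\int\|\fInt^\alpha\nabla\theta_h\|^2$ piece via Lemma~\ref{lem: error A}; the $\vec{B}_3(\theta_h)$ contribution via Lemma~\ref{lem: B integrals} together with Lemma~\ref{lem: error B}; the consistency term $\iprod{(\Mult\tilde\rho_h)',\fInt^\alpha(\Mult\theta_h)'}$ via~\eqref{eq: fractional A}, which leaves an explicit piece $\int\bigiprod{(\Mult\tilde\rho_h)',\fInt^\alpha(\Mult\tilde\rho_h)'}\,ds$; and the $\vec{B}_3(\rho_h)$ contribution via Lemma~\ref{lem: B integrals} combined with Lemma~\ref{lem: I beta rho}. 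The resulting integral inequality has the form~\eqref{eq: y(t) A} with $b(t)=Ct^{\alpha/2}$ and $a(t)\le Ct^{1+\alpha(r-1)}h^4K_r^2$, and \eqref{eq: y(t) B} then closes the estimate.

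The main technical obstacle will be the bookkeeping of time exponents: each fractional integral of $\rho_h$, $\Mult\rho_h'$, or $\tilde\rho_h$ introduces a factor $t^{\alpha(r-2)/2}h^2K_r$ multiplied by some positive power of~$t$, and one must check that these contributions combine to give exactly the exponent $1+\alpha(r-1)$ in the bound on $a(t)$---just as they did in the proofs of Lemmas~\ref{lem: error A} and~\ref{lem: error B}. Once this bookkeeping is verified, combining the bounds on $\theta_h$ and $\rho_h$ with the Step~1 reduction completes the proof.
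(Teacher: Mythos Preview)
Your proposal is correct and follows essentially the same approach as the paper's proof: differentiate~\eqref{eq: M theta_h eqn}, test with $\chi=\fInt^\alpha(\Mult\theta_h)'$, control the $\vec{B}_3$ and $\tilde\rho_h$ terms via Lemmas~\ref{lem: B integrals}, \ref{lem: I beta rho}, \ref{lem: error A}, \ref{lem: error B} and~\eqref{eq: fractional A}, close with the Gronwall inequality~\eqref{eq: y(t) A}--\eqref{eq: y(t) B}, and finish with Lemma~\ref{lem: phi(t)-phi(0)}. The only cosmetic difference is that you invoke stability at the outset to reduce to $u_{0h}=P_hu_0$, whereas the paper does this reduction at the end; the content is the same.
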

\begin{proof}
Suppose in the first instance that $u_{0h}=P_hu_0$, as required for
Lemmas \ref{lem: error A}~and \ref{lem: error B}.
Differentiate~\eqref{eq: M theta_h eqn} to obtain
\begin{multline*}
\iprod{(\Mult\theta_h)',\chi}
	+\iprod{\partial_t^{1-\alpha}\Mult\nabla\theta_h,\nabla\chi}
	+\alpha\iprod{\fInt^\alpha\nabla\theta_h,\nabla\chi}\\
	=\iprod{(\Mult\tilde\rho_h)',\chi}
	+\iprod{\vec{B}_3(\theta_h)-\vec{B}_3(\rho_h), \nabla\chi},
\end{multline*}
where $\vec{B}_3(\phi)$ is again defined as in~\eqref{eq: M B}.
Noting that
\begin{align*}
\bigl|\iprod{\vec{B}_3(\theta_h)-\vec{B}_3(\rho_h)
	-\alpha\fInt^\alpha\theta_h, \nabla\chi}\bigr|
&\le\|\nabla\chi\|^2
	+\tfrac12\bigl(\|\vec{B}_3(\theta_h)-\vec{B}_3(\rho_h)\|^2\\
	&\qquad{}+\tfrac12\alpha^2\|\fInt^\alpha\nabla\theta_h\|^2\bigr),
\end{align*}
we choose $\chi=\partial_t^{1-\alpha}\Mult\theta_h
=(\fInt^\alpha\Mult\theta_h)'$, and observe that
$(\Mult\theta_h)(0)=0$ so \eqref{eq: Caputo} implies
$\chi=\fInt^\alpha(\Mult\theta_h)'$.  Thus, after cancelling
$\|\nabla\chi\|^2$,
\begin{align*}
\iprod{(\Mult\theta_h)',\fInt^\alpha(\Mult\theta_h)'}&\le
	\iprod{(\Mult\tilde\rho_h)',\fInt^\alpha(\Mult\theta_h)'}\\
	&\qquad{}+\tfrac12\|\vec{B}_3(\theta_h)-\vec{B}_3(\rho_h)\|^2
		+\tfrac12\alpha^2\|\fInt^\alpha\nabla\theta_h\|^2.
\end{align*}
Integrating in time, and then applying~\eqref{eq: fractional A}
to the first term on the right hand side, with $\epsilon=1/2$,
$\phi=(\Mult\tilde\rho_h)'$ and $\psi=(\Mult\theta_h)'$, it follows
that
\begin{multline*}
\int_0^t\bigiprod{(\Mult\theta_h)',\fInt^\alpha(\Mult\theta_h)'}\,ds
	\le C\int_0^t\bigiprod{(\Mult\tilde\rho_h)',
		\fInt^\alpha(\Mult\tilde\rho_h)'}\,ds\\
		+\int_0^t\bigl(\|\vec{B}_3(\theta_h)-\vec{B}_3(\rho_h)\|^2
			+\|\fInt^\alpha\nabla\theta_h\|^2\bigr)\,ds.
\end{multline*}
Since, using \eqref{eq: M I},
\begin{align*}
(\Mult\tilde\rho_h)'&=\bigl[\Mult(\rho_h-\fInt^\alpha\rho_h)\bigr]'
	=\rho_h+\Mult\rho'_h-\bigl[\fInt^\alpha\Mult\rho_h
		+\alpha\fInt^{\alpha+1}\rho_h\bigr]'\\
	&=\rho_h+\Mult\rho'_h-\fInt^\alpha(\Mult\rho_h)'
		-\alpha\fInt^\alpha\rho_h\\
	&=\rho_h+\Mult\rho'_h-\fInt^\alpha\Mult\rho_h'
		-(1+\alpha)\fInt^\alpha\rho_h
\end{align*}
we see from \eqref{eq: R_h error}, \eqref{eq: u reg}~and
Lemma~\ref{lem: I beta rho} that
$\|(\Mult\tilde\rho_h)'\|\le C\eta t^{\alpha(r-2)/2}(1+t^\alpha)$
where, as before, $\eta=h^2K_r$.  Consequently,
\[
\int_0^t\bigiprod{(\Mult\tilde\rho_h)',
		\fInt^\alpha(\Mult\tilde\rho_h)'}\,ds
	\le C\eta^2 t^{1+\alpha(r-1)},
\]
and by Lemma~\ref{lem: B integrals},
\begin{align*}
\int_0^t\|\vec{B}_3(\rho_h)\|^2\,ds
	&\le C\int_0^t\bigl(\|\fInt^\alpha(\Mult\rho_h)'\|^2
	+\|\fInt^\alpha(\Mult\rho_h)\|^2+\|\fInt^\alpha\rho_h\|^2
		\bigr)\,ds\\
	&\le C\eta^2\int_0^t\bigl(t^{\alpha r}
		+t^{2+\alpha r}+t^{\alpha r} \bigr)\,ds
	\le C\eta^2t^{1+\alpha r},
\end{align*}
showing that
\begin{multline*}
\int_0^t\bigiprod{(\Mult\theta_h)',\fInt^\alpha(\Mult\theta_h)'}\,ds
	\le C\eta^2t^{1+\alpha(r-1)}\\
	+C\int_0^t\bigl(\|\fInt^\alpha\nabla\theta_h\|^2
		+\|\fInt^\alpha\Mult\theta_h\|^2+\|\fInt^\alpha\theta_h\|^2
		\bigr)\,ds+C\int_0^t\|\fInt^\alpha(\Mult\theta_h)'\|^2\,ds.
\end{multline*}
Using Lemmas \ref{lem: error A}~and \ref{lem: error B}, we find that
the second term on the right is bounded by
$Ct^{1+\alpha(r-1)}\eta^2$.  It follows using
Lemma~\ref{lem: y(t)} that the function
\[
y(t)=\int_0^t
	\bigiprod{(\Mult\theta_h)',\fInt^\alpha(\Mult\theta_h)'}\,ds
\]
satisfies an inequality of the form~\eqref{eq: y(t) A} with
$a(t)=Ct^{1+\alpha(r-1)}\eta^2$~and $b(t)=Ct^{\alpha/2}$.
Therefore, using Lemma~\ref{lem: phi(t)-phi(0)}
with~$\phi=\Mult\theta_h$, followed by~\eqref{eq: y(t) B}, we have
\[
\|\Mult\theta_h\|^2\le Ct^{1-\alpha}y(t)\le Ct^{1-\alpha}a(t)
	\le Ct^{2+\alpha(r-2)}\eta^2,
\]
which is equivalent to the estimate
$\|\theta_h\|\le Ct^{\alpha(r-2)/2}h^2K_r$.  Recalling
\eqref{eq: eh}, the desired error bound in the
case~$u_{0h}=P_hu_0$ follows by the triangle inequality and the
case~$\beta=0$ of Lemma~\ref{lem: I beta rho}.

The error bound for general~$u_{0h}$ now follows from the stability
result of Theorem~\ref{thm: stability}.  In fact, if $u_h^*$~and
$u_h$ denote the finite element solutions satisfying
$u_h^*(0)=P_hu_0$~and $u_h(0)=u_{0h}$, then the difference
$u_h-u^*_h$ is the finite element solution with initial
value~$u_{0h}-P_hu_0$ so
\[
\|u_h(t)-u^*_h(t)\|\le C\|u_{0h}-P_hu_0\|
	\quad\text{for $0\le t\le T$.}
\]
We obtain the desired estimate for~$\|u_h(t)-u(t)\|$ after applying
the triangle inequality, noting that
$\|u_h^*(t)-u(t)\|\le Ct^{\alpha(r-2)/2}h^2K_r$.\qed
\end{proof}

If $r<2$, then the error estimate in the theorem becomes unbounded
as~$t\to0$, but the stability result of Theorem~\ref{thm: stability}
shows that the error must in fact remain bounded.
\section{Numerical examples}\label{sec: numerical}
We discuss experiments with two problems, using a fully-discrete
scheme of implicit Euler type. For time levels
$0=t_0<t_1<t_2<\cdots<t_N=T$, we denote the
$n$th step size by~$k_n=t_n-t_{n-1}$ and the associated subinterval
by~$I_n=(t_{n-1},t_n)$, for $1\le n\le N$.  The maximum step
size~$k=\max_{1\le n\le N}k_n$ is sometimes used to label quantities
that depend on the mesh. With any sequence of values $V^1$, $V^2$,
\dots, $V^N$ we associate the piecewise-constant function $\check V$
defined by
\[
\check V(t)=V^n\quad\text{for $t_{n-1}<t<t_n$ and $n\ge1$.}
\]
Integrating the finite element equation~\eqref{eq: FEM}
over the $n$th time interval~$I_n$ gives
\[
\bigiprod{u_h(t_n)-u_h(t_{n-1}),\chi}
	+\int_{I_n}\bigiprod{\partial_t^{1-\alpha}\nabla u_h,
		\nabla\chi}\,dt
	-\int_{I_n}\bigiprod{\vec{F}\partial_t^{1-\alpha} u_h,
		\nabla\chi}\,dt=0,
\]
for all~$\chi\in\Sh$, and we approximate~$u_h(t_n)$ by~$U^n_h\in\Sh$
satisfying
\begin{equation}\label{eq: Unh}
\bigiprod{U^n_h-U^{n-1}_h,\chi}
	+\int_{I_n}\bigiprod{\partial_t^{1-\alpha}\nabla\check U_{h},
		\nabla\chi}\,dt
	-\int_{I_n}\bigiprod{\check{\vec{F}}\partial_t^{1-\alpha}\check U_h,
		\vec{\nabla}\chi}\,dt=0,
\end{equation}
for all $\chi\in\Sh$~and for $1\le n\le N$, with~$U^0_h=u_{0h}$.
For $1\le p\le Q_h:=\dim\Sh$, let $\vec{x}_p$ denote the $p$th free
node of the spatial mesh, and let $\phi_p\in\Sh$ denote the $p$th
nodal basis function, so that $\phi_p(\vec{x}_q)=\delta_{pq}$~and
\[
U^n_h(\vec{x})=\sum_{p=1}^{Q_h}U^n_p\phi_p(\vec{x})\quad
	\text{where}\quad
U^n_p=U^n_h(\vec{x}_p)\approx u_h(\vec{x}_p,t_n)\approx u(\vec{x}_p,t_n).
\]
We define $Q_h\times Q_h$ matrices $\vec{M}$~and $\vec{G}^n$ with
entries
\[
M_{pq}=\iprod{\phi_q,\phi_p}\quad\text{and}\quad
G^n_{pq}=\iprod{\nabla
\phi_{q},\nabla\phi_p}-\iprod{\vec{F}^n\phi_q,\nabla\phi_p},
\]
where $\vec{F}^n(\vec{x})=\vec{F}(\vec{x},t_n)$, and the
$Q_h$-dimensional column vector $\vec{U}^n$ with components~$U^n_p$.
It follows
from~\eqref{eq: Unh} that
\[
\vec{M}\vec{U}^n-\vec{M}\vec{U}^{n-1}
	+\sum_{j=1}^n\omega_{nj}\vec{G}^n\vec{U}^j
	-\sum_{j=1}^{n-1}\omega_{n-1,j}\vec{G}^n\vec{U}^j=0
	\quad\text{for~$1\le n\le N$,}
\]
with weights~$\omega_{nj}=\int_{I_j}\omega_\alpha(t_n-s)\,ds$
for~$1\le j\le n\le N$.
Thus, at the $n$th time step we must solve the linear system
\[
\bigl(\vec{M}+\omega_{nn}\vec{G}^n\bigr)\vec{U}^n
	=\vec{M}\vec{U}^{n-1}
	-\sum_{j=1}^{n-1}\bigl(\omega_{nj}-\omega_{n-1,j}\bigr)\vec{G}^n
	\vec{U}^j.
\]
Although this fully-discrete scheme lacks a theoretical error
analysis, we observed numerically that first-order accuracy in time is
achieved, for~$t$ bounded away from zero, if we use a graded mesh of
the form
\begin{equation}\label{eq: graded}
t_n = (n/N)^\gamma T
	\quad\text{for $0\le n\le N$, with $\gamma=1/\alpha$.}
\end{equation}
Our earlier paper~\cite[Table~5.3]{KimMcLeanMustapha2016}
includes computations with \emph{smooth} initial data, in which we
observed that the $L_2$ error is $O(h^2)$ uniformly for~$0\le t\le T$,
consistent with Theorem~\ref{thm: error} when~$r=2$.  Here, we instead
focus on the case of \emph{non}-smooth initial data.

\begin{figure}
\begin{center}
\includegraphics[scale=0.5]{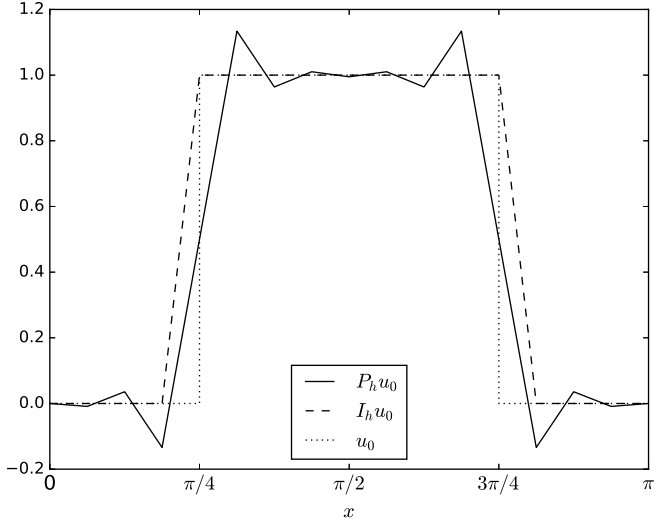}
\end{center}
\caption{The $L_2$-projection~$P_hu_0$ and the nodal
interpolant~$I_h u_{0h}$ of the discontinuous
initial data~\eqref{eq: discontinuous} when~$Q_h=15$.}
\label{fig: u0}
\end{figure}

\subsection{Dirichlet boundary condition}
In our first example, $F(x,t) = -x + \sin t$,
$T=1$~and $\Omega = (0,\pi)$,
with homogeneous Dirichlet boundary conditions
$u(0,t)=0=u(\pi,t)$ and discontinuous initial data given by
\begin{equation}\label{eq: discontinuous}
u_0(x) =
\begin{cases}
 1,\quad x\in[\pi/4,3\pi/4]\\
 0,\quad x\in[0,\pi/4)\cup(3\pi/4,1];
\end{cases}
\end{equation}
Figure~\ref{fig: u0} shows $u_0$~and its $L_2$-projection~$P_hu_0$, as
well as the nodal interpolant~$I_h u_0\in\Sh$ defined by
\begin{equation}\label{eq: nodal}
I_h u_0(x_p) =
\begin{cases}
 1,\quad x_p\in[\pi/4,3\pi/4]\\
 0,\quad x_p\in[0,\pi/4)\cup(3\pi/4,1].
\end{cases}
\end{equation}
The Dirichlet eigenvalues and orthonormal eigenfunctions of
$-\nabla^2=-\partial_x^2$ are
\[
\lambda_m=m^2\quad\text{and}\quad
\varphi_m(x)=\biggl(\frac{2}{\pi}\biggr)^{1/2}\,\sin m x
\quad\text{for $m\in\{1,2,3,\ldots\}$,}
\]
so for $0\le r<1/2$ we have
\[
\|u_0\|_r^2=\sum_{m=1}^\infty m^{2r}\iprod{u_0,\varphi_m}^2
	=\frac{4}{\pi}\sum_{j=1}^\infty(2j-1)^{2(r-1)}
	\le\frac{C}{1-2r}.
\]
If our conjecture that $K_r=C\|u_0\|_r$ in~\eqref{eq: u reg} is
valid, then applying Theorem~\ref{thm: error} with
$r=\tfrac12-\epsilon$~and $\epsilon^{-1}=\log(e^2+t^{-1})$, so that
$t^{-\epsilon}\le e$~and $0<\epsilon<1/2$, gives
\begin{equation}\label{eq: error ex1}
\|u_h(t)-u(t)\|\le C\|u_{0h}-P_hu_0\|+
	Ct^{-3\alpha/4}h^2\sqrt{\log(e^2+t^{-1})}
	\quad\text{for $0<t\le1$.}
\end{equation}

In our computations, we employed nonuniform time levels given
by~\eqref{eq: graded}, but a uniform spatial mesh with~$h=1/(Q_h+1)$.
In all cases, $Q_h+1$ was divisible by~$4$ so that the
points~$\pi/4$~and $3\pi/4$ (where $u_0$ is discontinuous)
coincided with two of the nodes. We first computed a reference
solution~$\Uref^n=U^n_h$ using a fine mesh with $N=10,000$~and
$Q_h=511$. We then computed $U^n_h$ for~$Q_h\in\{7,15,31,63\}$, again
with~$N=10,000$.  The initial data was chosen as~$u_{0h}=P_hu_0$ in
each case.   With such a small~$k$, the error,
\[
E_{h,k}^n=\|U^n_h-\Uref^n\|
	\quad\text{for $1\le n\le N$,}
\]
was dominated by the influence of the spatial discretisation, and we
sought to estimate the convergence rates $\sigma_{h,k}$ such that
\begin{equation}\label{eq: E*}
E^*_{h,k}=\max_{0\leq n\leq N}
	\frac{t_n^{3\alpha/4}E_{h,k}^n}{\sqrt{\log(e^2+t_n^{-1})}}
	\approx Ch^{\sigma_{h,k}},
\end{equation}
from the relation
\begin{equation}\label{eq: rate}
\sigma_{h,k}=\log_2(E^*_{2h,k}/E^*_{h,k}).
\end{equation}
Table~\ref{table: errors} shows the values of $E^*_{h,k}$~and
$\sigma_{h,k}$ for three different values of~$\alpha$. The computed
values of~$\sigma_{h,k}$ are close to~$2$, as expected from
Theorem~\ref{thm: error}.  Figure~\ref{fig: error} shows how the
$L_2$-error $E_{h,k}^n$ varies with~$t_n$ for different~$h$
when~$\alpha=0.75$, again keeping $N=10,000$.  Due to
the log-log scale, the graph of a function proportional
to~$t^{-3\alpha/4}$ appears as a straight line with
gradient~$-3\alpha/4$, indicated by the small triangle, and we observe
exactly this behaviour of the error for~$t$ close---but not \emph{too}
close---to zero.

\begin{table}
\caption{Weighted errors~\eqref{eq: E*}~and
convergence rates~\eqref{eq: rate} for different $\alpha$, when
$u_{0h}=P_hu_0$.}
\label{table: errors}
\begin{center}
\renewcommand{\arraystretch}{1.2}
\begin{tabular}{c|cc|cc|cc}
$Q_h$&
\multicolumn{2}{c|}{$\alpha=0.25$}&
\multicolumn{2}{c|}{$\alpha=0.50$}&
\multicolumn{2}{c}{$\alpha=0.75$}\\
\hline
   7& 7.98e-03&       & 7.77e-03&       & 7.84e-03&       \\
  15& 1.96e-03&  2.024& 1.91e-03&  2.024& 1.94e-03&  2.017\\
  31& 4.88e-04&  2.008& 4.75e-04&  2.008& 4.82e-04&  2.007\\
  63& 1.21e-04&  2.014& 1.18e-04&  2.014& 1.19e-04&  2.015
\end{tabular}
\end{center}
\end{table}

\begin{figure}
\begin{center}
\includegraphics[scale=0.5]{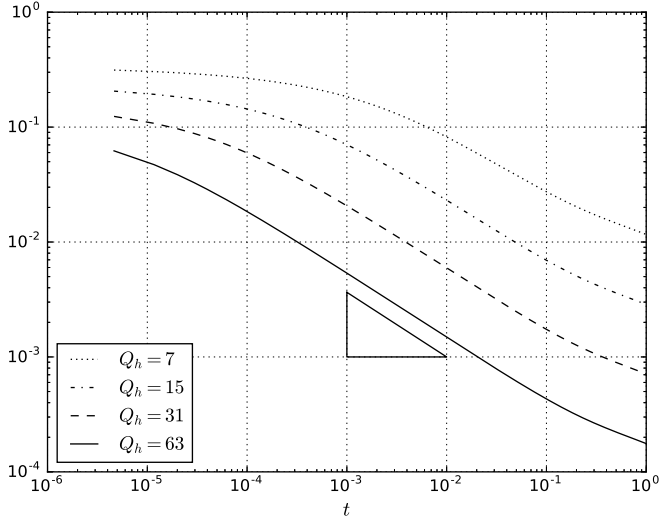}
\caption{Plots of the error $E^n_{h,k}$ as a function of~$t_n$,
for~$\alpha=0.75$ and different choices of~$Q_h$.
The triangle indicates the gradient~$-3\alpha/4$ for a function
proportional to~$t^{-3\alpha/4}$; cf.~\eqref{eq: error ex1}. Note
the logarithmic scales.} \label{fig: error}
\end{center}
\end{figure}

\begin{table}
\caption{Weighted errors~\eqref{eq: E*}~and
convergence rates~\eqref{eq: rate} for different $\alpha$, when
$u_{0h}=I_hu_0$.}
\label{table: errors Ih}
\begin{center}
\renewcommand{\arraystretch}{1.2}
\begin{tabular}{c|cc|cc|cc}
$Q_h$&
\multicolumn{2}{c|}{$\alpha=0.25$}&
\multicolumn{2}{c|}{$\alpha=0.50$}&
\multicolumn{2}{c}{$\alpha=0.75$}\\
\hline
  7& 7.79e-02&       & 7.46e-02&       & 7.27e-02&       \\
  15& 4.04e-02&  0.948& 3.86e-02&  0.950& 3.76e-02&  0.952\\
  31& 2.06e-02&  0.973& 1.97e-02&  0.973& 1.91e-02&  0.974\\
  63& 1.04e-02&  0.987& 9.93e-03&  0.987& 9.65e-03&  0.987
\end{tabular}
\end{center}
\end{table}
Physically, the solution~$u$ must be non-negative, but the
oscillations in the discrete initial data~$P_hu_0$ mean that
$U^n_h(x)$ was negative for some values of~$(x,t_n)$ near the points
of discontinuity $(\pi/4,0)$~and $(3\pi/4,0)$.  It is tempting to
choose as the discrete initial data~$u_{0h}=I_hu_0$, the nodal
interpolant~\eqref{eq: nodal}. In this way, $U^0_h=u_{0h}(x)\ge0$ for
all~$x$. However, since
\[
\iprod{u_{0h}-P_hu_0,\chi}=\iprod{u_{0h}-u_0,\chi}
	\le\|u_{0h}-u_0\|\|\chi\|\quad\text{for all $\chi\in\Sh$,}
\]
by choosing $\chi=u_{0h}-P_hu_0$ we see that
\[
\|u_{0h}-P_hu_0\|\le\|u_{0h}-u_0\|=\sqrt{\tfrac23}\,h
	\quad\text{when $u_{0h}=I_hu_0$.}
\]
Thus, Theorem~\ref{thm: error} now yields an error bound of
order~$h+t^{-3\alpha/4}h^2$ (ignoring the log factor), and
Table~\ref{table: errors Ih} indeed shows only first-order
convergence for this choice of initial data.

At the end of Section~\ref{sec: stability}, we remarked that in our
stability estimate the constant tends to infinity as~$\alpha$
approaches~$1$. Since the finite element method is stable in the
classical case~$\alpha=1$, we suspect that the dependence of the
stability constant on~$\alpha<1$ is an artefact of the method of
proof.  To investigate this question numerically, we computed
$\|u_h(t)\|$ for random initial data, that is, when the value
of~$u_{0h}$ at each node was a random number from a uniform
distribution in~$[0,1]$.  In practice, we did not observe any
deterioration in the stability of the method for~$\alpha$ close
to~$1$.

\begin{figure}
\begin{center}
\includegraphics[scale=0.6]{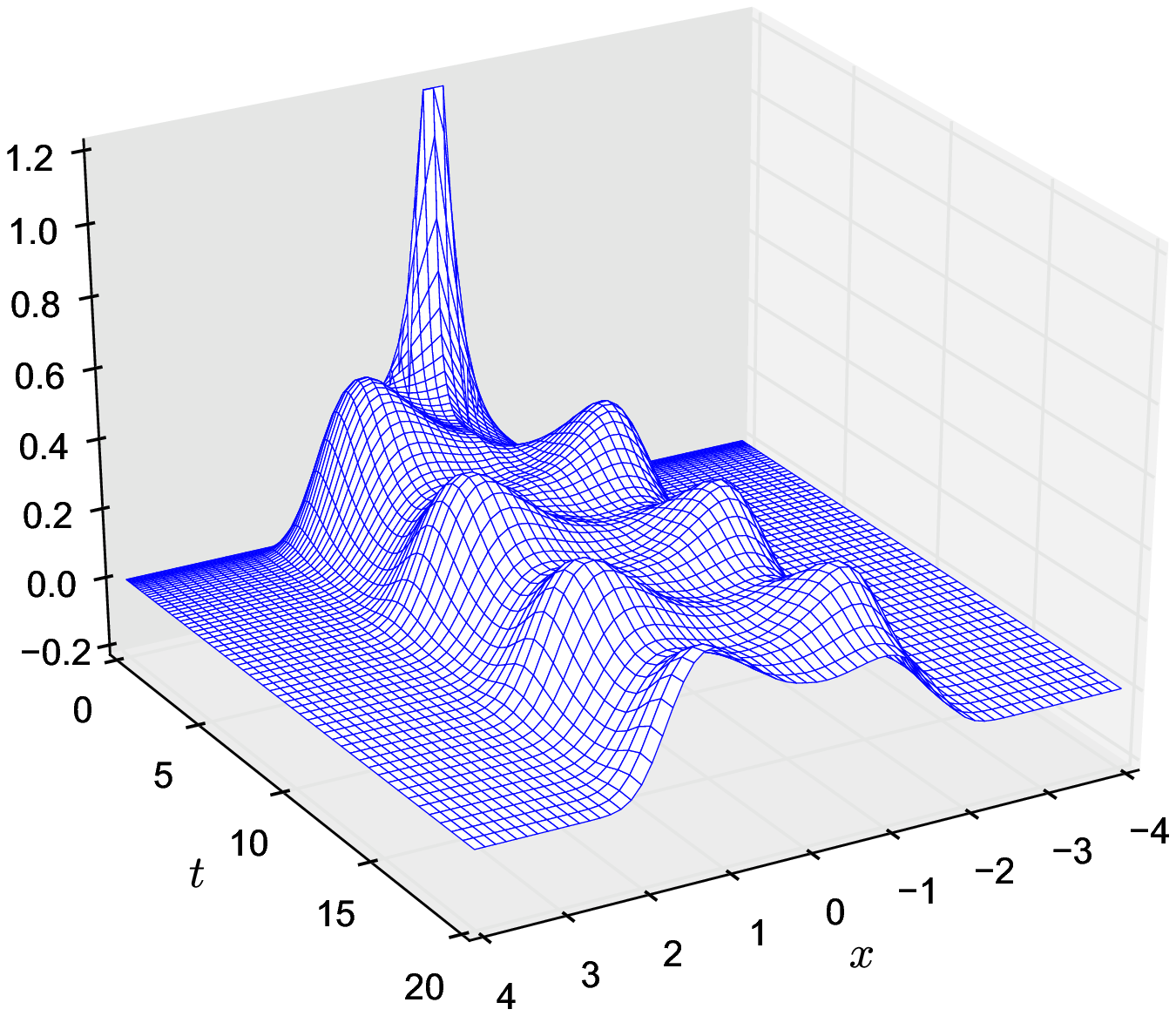}
\caption{Surface plot of a solution using the
potential~\eqref{eq: potential} and imposing the zero-flux boundary
condition~\eqref{eq: Neuman bc};  the part of the surface
where $t<0.005$ is omitted.}
\label{fig: stoch res}
\end{center}
\end{figure}

\begin{figure}
\begin{center}
\includegraphics[scale=0.6]{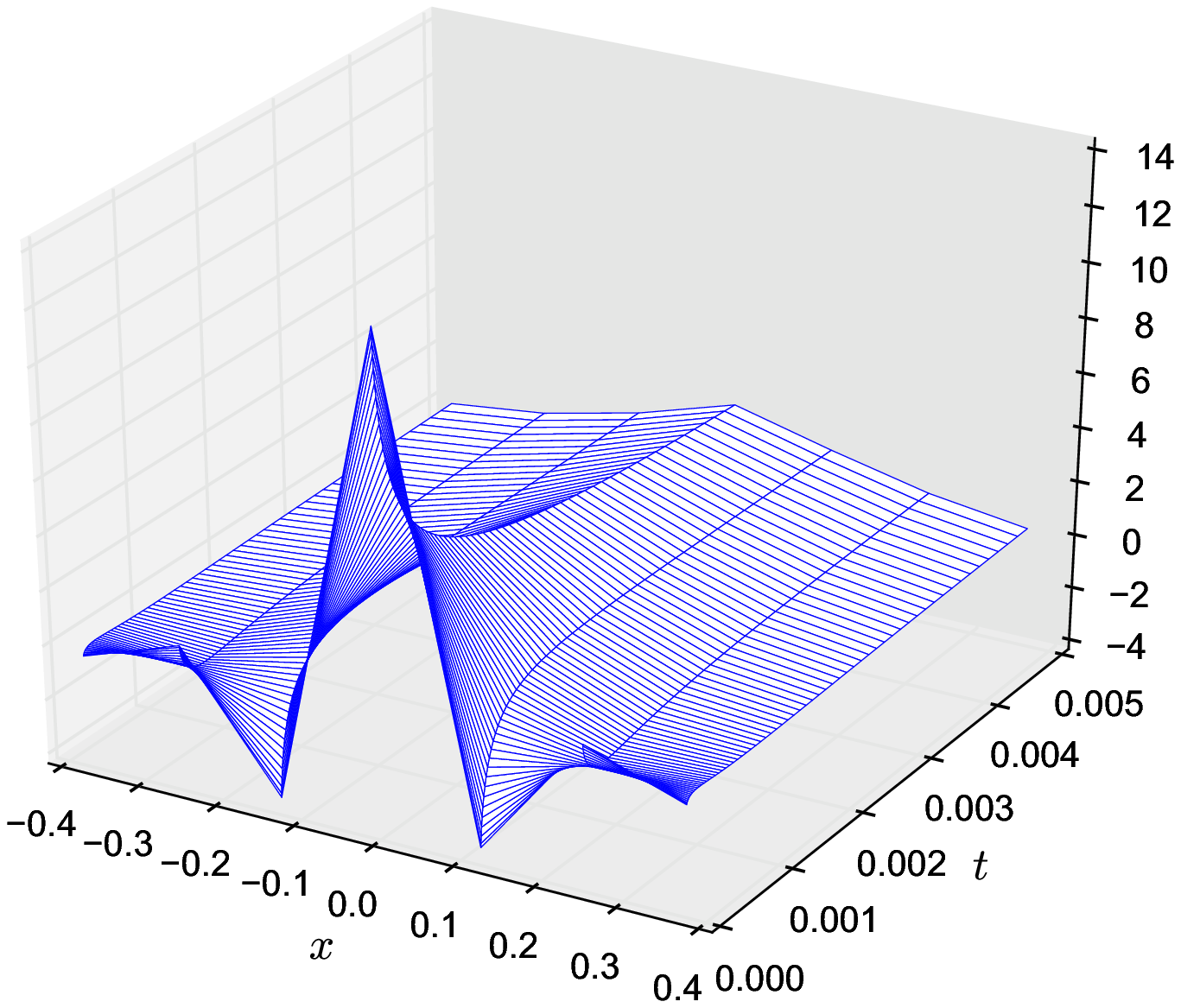}
\caption{Detail of the surface plot showing the spurious
oscillations for~$(x,t)$ near the singularity at~$(0,0)$.}
\label{fig: osc}
\end{center}
\end{figure}

\subsection{Zero-flux boundary condition}
In our second example,
\[
F(x,t)=-\frac{\partial V}{\partial x},\quad
\alpha=0.75,\quad T=20,\quad \Omega=(-L,L),\quad L=4,
\]
where $V$ is a double-well potential perturbed by an oscillation in
time,
\begin{equation}\label{eq: potential}
V(x,t)=\tfrac14x^4-\tfrac12x^2-x\cos t.
\end{equation}
Gammaitoni et al.~\cite{GammaitoniEtAl1998} used this potential for
the classical Fokker--Planck equation ($\alpha=1$) in their study of
stochastic resonance.  We imposed the zero-flux boundary
condition~\eqref{eq: Neuman bc} and chose as the initial
data~$u_0(x)=\delta(x)$.  The solution $u$ then gives the
probability distribution for a single diffusing particle initially
located at~$x=0$.  Since the Dirac delta functional does not belong
to~$L_2(\Omega)$, our stability result (Theorem~\ref{thm: stability})
does not apply, and $P_hu_0$ is not defined. Nevertheless, the
functions in~$\Sh$ are continuous, so by extending the $L_2$ inner
product to a dual pairing we can define the discrete initial
data~$u_{0h}\in\Sh$ by
\[
\iprod{u_{0h},\chi}=\iprod{u_0,\chi}=\iprod{\delta,\chi}=\chi(0)
	\quad\text{for all~$\chi\in\Sh$.}
\]
Figure~\ref{fig: stoch res} shows a surface plot of the numerical
solution using $N=\text{4,096}$ time steps, now with a stronger mesh
grading $\gamma=2$ in~\eqref{eq: graded}, and $Q_h=65$ spatial
degrees of freedom. (Thus the delta function is centred on the
node~$\vec{x}_{33}=0$).  We cut off the initial part of the plot where
$t<0.005$ to avoid the oscillations, shown separately in
Figure~\ref{fig: osc}, which are much larger than was the case for our
first example.  The total mass should be constant and we observed in
practice that $\int_\Omega U^n_h=1$ to ten significant figures,
for~$0\le n\le N$.

\end{document}